\newtheorem{theorem}{Theorem}[section]
\newtheorem{lemma}[theorem]{Lemma}
\theoremstyle{definition}
\newtheorem{proposition}{Proposition}
\theoremstyle{remark}
\numberwithin{equation}{section}
\begin{document}

\title{New proofs of two $q$-analogues of Koshy's formula}
\author{Emma Yu Jin}
\address{Department of Computer Science, University of Kaiserslautern}
\curraddr{Kaiserslautern, Germany} \email{jin@cs.uni-kl.de}
\thanks{The work of the first author has been
supported by the research grants from DFG (Deutsche
Forschungsgemeinschaft), JI 207/1-1.}

%    author two information
\author{Markus E.\ Nebel}
\address{Department of Computer Science, University of Kaiserslautern}
\curraddr{Kaiserslautern, Germany}
\email{nebel@cs.uni-kl.de}
\thanks{}

%    \subjclass is required.
\subjclass[2010]{Primary 05A19 }

\date{}

\dedicatory{}

%    "Communicated by" -- provide editor's name; required.
\commby{Jim Haglund}

%    Abstract is required.
\begin{abstract}
In this paper we prove a $q$-analogue of Koshy's formula in terms of
the Narayana polynomial due to Lassalle and a $q$-analogue of
Koshy's formula in terms of $q$-hypergeometric series due to Andrews
by applying the inclusion-exclusion principle on Dyck paths and on
partitions. We generalize these two $q$-analogues of Koshy's formula
for $q$-Catalan numbers to that for $q$-Ballot numbers. This work
also answers an open question by Lassalle and two questions raised
by Andrews in 2010. We conjecture that if $n$ is odd, then for $m\ge
n\ge 1$, the polynomial $(1+q^n){m\brack n-1}_q$ is unimodal. If $n$
is even, for any even $j\ne 0$ and $m\ge n\ge 1$, the polynomial
$(1+q^n)[j]_q{m\brack n-1}_q$ is unimodal. This implies the answer
to the second problem posed by Andrews.

\end{abstract}

\maketitle

\section{Introduction and Background}
Let $C_n=\frac{1}{n+1}\binom{2n}{n}$ be the $n$-th Catalan number, a
recursive formula for Catalan numbers is given by Koshy as follows
\cite{Koshy}:
\begin{eqnarray}\label{E:Koshy1}
\sum_{r=0}^n(-1)^r\binom{n-r+1}{r}C_{n-r}=0.
\end{eqnarray}
Since this is a hypergeometric sum identity, it can be proved by
Zeilberger's creative telescoping method \cite{Zeil:91}.
Furthermore, the Koshy's formula (\ref{E:Koshy1}) follows
immediately from the orthogonality of a special Fibonacci polynomial
due to Cigler \cite{Cigler:96,Cigler:13}. However, these approaches
do not provide structural insight; to this end a combinatorial proof
based on the inclusion-exclusion principle is needed. Our proof of
(\ref{E:Koshy1}) in the context of labeled elevated Dyck paths is
exactly of this kind. Here, a Dyck path of length $2n$ is a
$2$-dimensional lattice path from $(0,0)$ to $(2n,0)$ with allowed
steps $(1,1)$ ($U$-step) and $(1,-1)$ ($D$-step) that never goes
below the $x$-axis. A Dyck path $p$ is an elevated Dyck path if
$p=U\bar{p}D$ where $\bar{p}$ is a Dyck path. We use $D_n$ to denote
the set of elevated Dyck paths of length $2n+2$. Then $\vert
D_n\vert=C_n$. We denote by $P$ the peak $UD$. We call $h$ an
up-peak if $h=UUD$. Let $\mathcal{UP}(p)$ (resp.\ $\mathcal{U}(p)$)
be the set of up-peaks (resp.\ $U$-steps) contained in a path $p$.
We denote by $(\mathcal{M}^s,j,D_n)$ the set of elevated Dyck paths
$p\in D_n$ having exactly $j$ elements from the set $\mathcal{M}(p)$
labeled by $s$. We set
$a_{n,j}(\mathcal{M})=\vert(\mathcal{M}^s,j,D_n)\vert$. For the case
$\mathcal{M}=\mathcal{UP}$, the number $a_{n,j}(\mathcal{UP})$ can
be counted in two different ways.

First, there is a bijection between the set
$((\mathcal{UP})^s,j,D_n)$ and the set $(\mathcal{U}^s,j,D_{n-j})$.
For a given path $p\in D_n$ that has $j$ up-peaks labeled by $s$, we
can get a unique path $p'\in D_{n-j}$ that has $j$ $U$-steps labeled
by $s$ if we remove the peak $P$ from each labeled up-peak. As a
result, we have
\begin{align}\label{E:agm}
a_{n,j}(\mathcal{UP})=\left\vert((\mathcal{UP})^s,j,D_n)\right\vert
=\left\vert(\mathcal{U}^s,j,D_{n-j})\right\vert=\binom{n-j+1}{j}C_{n-j}.
\end{align}
Second, we set $g_{n,m}=\vert\{p\in D_n: \vert
\mathcal{UP}(p)\vert=m\}\vert$. For a given elevated Dyck path $p\in
D_n$ such that $\vert \mathcal{UP}(p)\vert=m$, there are
$\binom{m}{j}$ ways to label $j$ up-peaks by $s$, which leads to
$a_{n,j}(\mathcal{UP})=\sum_{m\ge j}\binom{m}{j}g_{n,m}$. In
combination of (\ref{E:agm}), we obtain
\begin{align}\label{E:iekoshy}
\sum_{m\ge j}{m\choose
j}g_{n,m}=a_{n,j}(\mathcal{UP})=\binom{n-j+1}{j}C_{n-j}.
\end{align}
Let $G_n(q)$ be the ordinary generating function of $g_{n,m}$, i.e., $
\sum_{m\ge 1}g_{n,m}q^m=G_n(q)$. Then from (\ref{E:iekoshy}), we
can derive
\begin{eqnarray}\label{E:1}
G_n(q)=\sum_{j\ge 0}\frac{G_n^{(j)}(1)}{j!}(q-1)^j=\sum_{j\ge
0}\binom{n-j+1}{j}C_{n-j}(q-1)^j,
\end{eqnarray}
where $G_n^{(j)}(1)=\frac{\partial^j G_n(q)}{\partial
q^j}\vert_{q=1}$. By setting $q=0$ on both sides of (\ref{E:1}), the
Koshy's formula (\ref{E:Koshy1}) follows. Our proof uses the
generating function approach to illustrate the inclusion-exclusion
principle, see Chapter 2.3 of \cite{Stanley:ec2} and Chapter 4.2 of
\cite{Wilfbook}. Here we will use this approach to prove
(\ref{E:koshyLa}) and (\ref{E:cnq1in1}). We can also prove the
sieving identity by a direct application of the inclusion-exclusion
method. To make this point clear, we will prove (\ref{E:exin2}) in
both ways in Section \ref{S:prove1}. Next we give two $q$-analogues
of Koshy's formula due to Lassalle and Andrews
\cite{Andrews:08,Lassalle:11}. The Narayana number $N_{n,k}$ and the
Narayana polynomial $N_n(q)$ are defined by
\begin{equation}
N_{n,k}=\frac{1}{n}{n\choose k-1}{n\choose k},\quad
N_n(q)=\sum_{k=1}^n N_{n,k}q^{k-1},
\end{equation}
where $N_{n,k}$ counts the number of Dyck paths of length $2n$ that
have $k$ peaks $P$. By utilizing a $\lambda$-identity of complete
functions, Lassalle \cite{Lassalle:11} proved for $n\ge 1$,
\begin{align}\label{E:koshyLa}
N_n(q)&=(1-q)^{n-1}+q\sum_{k=1}^{n-1}N_{n-k}(q)\sum_{m=0}^{k-1}
(-1)^m\binom{k-1}{m}\binom{n-m}{k}(1-q)^{k-m-1}.
\end{align}
Furthermore, Andrews gave another $q$-analogue of Koshy's formula in
terms of $q$-hypergeometric series. Here we adopt the standard
notations of $q$-series, i.e.,
\begin{align*}
[n]_q&=1+q+\cdots+q^{n-1},\,\, [n]_q!=[1]_q[2]_q\cdots[n]_q,\\
(x;q)_n&=(1-x)(1-qx)\cdots(1-q^{n-1}x),\\
{m\brack n}_q&=\frac{[m]_q!}{[n]_q![m-n]_q!}
=\frac{(q;q)_m}{(q;q)_n(q;q)_{m-n}}, \\
C_n(q)&=\frac{1}{[n+1]_q}{2n \brack n}_q.
\end{align*}
Then before mentioned $q$-analogue of Koshy's formula is given by
\begin{eqnarray}
C_n(q)&=&\sum_{r=1}^n(-1)^{r-1}T_r(n,q),\label{E:cnq1in1}\\
T_r(n,q)&=&q^{r^2-r}\frac{(-q^{n-r+1};q)_r}{(-q;q)_r}\cdot
{n-r+1\brack r}_q\cdot C_{n-r}(q).\label{E:cnq1in}
\end{eqnarray}
Andrews \cite{Andrews:08} raised open questions about $T_r(n,q)$,
which are:
\begin{enumerate}
  \item Is $T_r(n,q)$ a polynomial in $q$?
  \item Does $T_r(n,q)$ only have nonnegative coefficients of $q$ if $n\ge 2r$?
  \item Does $T_{r-1}(2r-1,-q)$ only have nonnegative coefficients?
  \item What is the partition-theoretic combinatorial interpretation of $T_r(n,q)$ for $n\ge 2r$,
        what is that of $T_{r-1}(2r-1,-q)$, and what is the sieving process
        on the partitions to eliminate all the non-Catalan partitions?
\end{enumerate}
In Section~\ref{S:Tr} we completely answer (1) and (3), and show (2)
for even $n$. Furthermore, in Sections~\ref{S:prove1}
and~\ref{S:prove2} we prove (\ref{E:koshyLa}) and
(\ref{E:cnq1in1}) by the inclusion-exclusion method. In
Sections~\ref{S:ball} and \ref{S:qball} we generalize
(\ref{E:koshyLa}) and (\ref{E:cnq1in1}) for
$N_n(q)$ and $C_n(q)$ to that for Ballot numbers $B_{n,r}$ and
$q$-Ballot numbers $B_r(n,q)$ . Finally we conjecture that if $n$ is
odd, then for $m\ge n\ge 1$, the polynomial $(1+q^{n}){m\brack
n-1}_q$ is unimodal. If $n$ is even, for any even $j\ne 0$ and $m\ge
n\ge 1$, the polynomial $(1+q^n)[j]_q{m\brack n-1}_q$ is unimodal.
This would answer question (2) from above for odd $n$.

%%%%%%%%%%
%%%%%%%%%%%%%%%%%%%%%%%%%%%%%%%%%%%%%%%%%%%%%%%%%%%%%%%%%%%%%
%%%%%%%%%%
\section{Proof of (\ref{E:koshyLa})}\label{S:prove1}
Before we prove (\ref{E:koshyLa}), we give some main ingredients
needed in the proof.

First we introduce the notions of a {\it tower} and a {\it colored
tower} for an elevated Dyck path. We will choose to label the
colored towers by $s$ in order to apply the inclusion-exclusion
method. Recall that $D_n$ is the set of elevated Dyck paths of
length $2n+2$. For any $p\in D_n$, we will use $U^iD^j\subseteq p$
to express the fact that $p$ contains $i$ consecutive $U$-steps that
are followed by $j$ consecutive $D$-steps. We call $t$ {\it a tower}
of height $i$ contained in an elevated Dyck path $p$, if
$t=U^iD^i\subseteq p$ and $UtD\not\subseteq p$. For any elevated
Dyck path $p\in D_n$, let $\mathcal{T}^*(p)$ be the set of towers
contained in $p$. We will color the towers in $\mathcal{T}^*(p)$ in
the following way:
For any $p\in D_n$,\\
Step $1$: color all the towers $t\subseteq p$ that immediately
follow a $U$-step;\\
Step $2$: color all the towers $t\subseteq p$
that immediately follow an uncolored tower $t'\subseteq p$ after Step $1$.\\[1mm]
We use $\mathcal{T}(p)\subseteq \mathcal{T}^*(p)$ to denote the set
of towers contained in a path $p$ which are colored according to
Step 1 and 2. We call every $t$ in the set $\mathcal{T}(p)$ {\it a
colored tower}. We set
$\mathcal{T}^c(p)=\mathcal{T}^*(p)-\mathcal{T}(p)$ and let
$\mathcal{T}_2(p)\subseteq \mathcal{T}(p)$ (resp.\
$\mathcal{T}_1(p)\subseteq \mathcal{T}(p)$) be the set of colored
towers of height $\ge 2$ (resp.\ height $=1$). As an example,
consider the path $p=U^3D^2UDUD^2\in D_4$. The colored towers in the
set $\mathcal{T}(p)$ are depicted using double lines in the
rightmost elevated Dyck path of the figure below. After Step~$1$,
the first tower in the left-to-right order is colored. The second
tower remains uncolored after Step $1$, and therefore after Step
$2$, the third tower is colored.
%%%%%%%%%%
%%%%%%%%%%%%%%%%%%%%%%%%%%%%%%%%%%%%%%%%%%%%%%%%%%%%%%%%%%%%%
%%%%%%%%%%
\begin{center}
\setlength{\unitlength}{3pt}
\begin{picture}(8,23)(40,6)
\put(-16,8){\circle*{0.5}}
\put(-16,8){\line(1,2){3}}\put(-13,14){\circle*{0.5}}
\put(-13,14){\line(1,2){3}}\put(-10,20){\circle*{0.5}}
\put(-10,20){\line(1,2){3}}\put(-7,26){\circle*{0.5}}
\put(-7,26){\line(1,-2){3}}\put(-4,20){\circle*{0.5}}
\put(-4,20){\line(1,-2){3}}\put(-1,14){\circle*{0.5}}
\put(-1,14){\line(1,2){3}}\put(2,20){\circle*{0.5}}
\put(2,20){\line(1,-2){3}}\put(5,14){\circle*{0.5}}
\put(5,14){\line(1,2){3}}\put(8,20){\circle*{0.5}}
\put(8,20){\line(1,-2){3}}\put(11,14){\circle*{0.5}}
\put(11,14){\line(1,-2){3}}\put(14,8){\circle*{0.5}}
%%%%%%%%%%%%%%%%%
\put(15,20){$\underrightarrow{\small{\mbox{Step }}1}$}
\put(28,8){\circle*{0.5}}
\put(28,8){\line(1,2){3}}\put(31,14){\circle*{0.5}}
\put(30.5,14){\line(1,2){3}}
\put(31,14){\line(1,2){3}}\put(34,20){\circle*{0.5}}
\put(33.5,20){\line(1,2){3}}
\put(34,20){\line(1,2){3}}\put(37,26){\circle*{1}}
\put(36.5,26){\line(1,-2){3}}
\put(37,26){\line(1,-2){3}}\put(40,20){\circle*{0.5}}
\put(39.5,20){\line(1,-2){3}}
\put(40,20){\line(1,-2){3}}\put(43,14){\circle*{0.5}}
\put(43,14){\line(1,2){3}}\put(46,20){\circle*{0.5}}
\put(46,20){\line(1,-2){3}}\put(49,14){\circle*{0.5}}
\put(49,14){\line(1,2){3}}\put(52,20){\circle*{0.5}}
\put(52,20){\line(1,-2){3}}\put(55,14){\circle*{0.5}}
\put(55,14){\line(1,-2){3}}\put(58,8){\circle*{0.5}}
%%%%%%%%%%%%%%%%%%%%%%%%%%%%%%%%%%%%%
\put(60,20){$\underrightarrow{\small{\mbox{Step }}2}$}
\put(70,25){$\mathcal{T}(p)$} \put(72,8){\circle*{0.5}}
\put(72,8){\line(1,2){3}}\put(75,14){\circle*{0.5}}
\put(74.5,14){\line(1,2){3}}
\put(75,14){\line(1,2){3}}\put(78,20){\circle*{0.5}}
\put(77.5,20){\line(1,2){3}}
\put(78,20){\line(1,2){3}}\put(81,26){\circle*{1}}
\put(80.5,26){\line(1,-2){3}}
\put(81,26){\line(1,-2){3}}\put(84,20){\circle*{0.5}}
\put(83.5,20){\line(1,-2){3}}
\put(84,20){\line(1,-2){3}}\put(87,14){\circle*{0.5}}
\put(87,14){\line(1,2){3}}\put(90,20){\circle*{0.5}}
\put(90,20){\line(1,-2){3}}\put(93,14){\circle*{0.5}}
\put(92.5,14){\line(1,2){3}}
\put(93,14){\line(1,2){3}}\put(96,20){\circle*{1}}
\put(95.5,20){\line(1,-2){3}}
\put(96,20){\line(1,-2){3}}\put(99,14){\circle*{0.5}}
\put(99,14){\line(1,-2){3}}\put(102,8){\circle*{0.5}}
\end{picture}
\end{center}
Second we will use some one-to-one correspondences between two sets
of labeled elevated Dyck paths. If there is a bijection between two
sets $A$ and $B$, we write $A\simeq B$. Recall that
$(\mathcal{M}^s,j,D_n)$ is the set of elevated Dyck paths $p\in D_n$
having exactly $j$ elements from the set $\mathcal{M}(p)$ labeled by
$s$. We consider the set of elevated Dyck paths $p\in D_n$ that have
$m$ colored towers labeled by $s$, which is the particular case
$\mathcal{M}=\mathcal{T}$ and $j=m$. Since each colored tower has
height $1$ or at least $2$, we have
$(\mathcal{T}^s,m,D_n)=((\mathcal{T}_1\cup\mathcal{T}_2)^s,m,D_{n})$.
Let $\mathcal{U}^2(p)$ be the set of $UU$-steps of an elevated Dyck
path $p\in D_n$, we will show
\begin{lemma}\label{L:1}
$(\mathcal{T}_1^s,1,D_n)\simeq ((\mathcal{U}^2\cup \mathcal{T}^c)^s,1,D_{n-1})$, $(\mathcal{T}_2^s,1,D_n)\simeq (\mathcal{T}^s,1,D_{n-1})$ and
$(\mathcal{T}^s,m,D_n)\simeq (\mathcal{U}^s,m,D_{n-m})$ for any $m$, $1\le m\le n$.
\end{lemma}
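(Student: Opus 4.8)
The plan is to read off, once and for all, exactly which towers of an elevated Dyck path get colored, and then to realize all three bijections through two elementary moves on paths: \emph{erasing} a colored tower of height $1$, and \emph{peeling one level} off a colored tower of height $\ge 2$. First I would group the maximal towers of $p\in D_n$ into maximal \emph{chains} $\tau_1\tau_2\cdots\tau_\ell$ of consecutive towers (each $\tau_{i+1}$ immediately following $\tau_i$, with the step just before $\tau_1$ and the step just after $\tau_\ell$ belonging to no tower). Tracing Steps $1$ and $2$ through such a chain, one finds that $\tau_i$ is colored exactly for $i$ odd when the chain is immediately preceded by a $U$-step, and exactly for $i$ even otherwise. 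I would then record the consequences I actually need: no two colored towers are adjacent; a colored tower that is not the head of a $U$-preceded chain is immediately preceded by an \emph{uncolored} tower; the colored/uncolored status of a tower depends only on the portion of $p$ to its left, hence is unchanged both by any modification made strictly to its right and by peeling a level off it; and $\mathcal U(p)$ splits as $\mathcal U^2(p)\sqcup\mathcal T(p)\sqcup\mathcal T^c(p)$, by sending a $U$-step either to the $UU$-step it begins or to the unique maximal tower of which it is the apex.

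The two $m=1$ bijections come from small, local surgeries. For $(\mathcal T_2^s,1,D_n)\simeq(\mathcal T^s,1,D_{n-1})$ I would send $p$, with its labeled colored tower $t=U^iD^i$ ($i\ge 2$), to the path $p'\in D_{n-1}$ in which $t$ is replaced by $U^{i-1}D^{i-1}$, the label staying on this shorter tower: since the steps flanking $t$ and everything to the left of $t$ are untouched, $U^{i-1}D^{i-1}$ is again a maximal tower with the same color as $t$, no tower anywhere changes color, and the inverse merely inserts a $UD$ at the apex of the labeled colored tower of $p'$. For $(\mathcal T_1^s,1,D_n)\simeq((\mathcal U^2\cup\mathcal T^c)^s,1,D_{n-1})$ I would instead take $t=UD$, delete its two steps, and relay the label: if $t$ is immediately preceded by a $U$-step $u$, a short argument from maximality (and from $p$ ending in $D$) shows the step after $t$ is a $U$-step, so $u$ becomes a $UU$-step of $p'$, which I label; if $t$ is immediately preceded by a tower $t'$, then by the chain description $t'$ is uncolored, remains a maximal uncolored tower of $p'$, and I label $t'$. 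Conversely, inserting a $UD$ immediately after a labeled $e\in\mathcal U^2(p')\cup\mathcal T^c(p')$ and labeling it produces a colored height-$1$ tower, and the recorded facts show the two maps are mutually inverse.

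Finally, for $(\mathcal T^s,m,D_n)\simeq(\mathcal U^s,m,D_{n-m})$ the case $m=1$ is immediate from the previous two, splitting $(\mathcal T^s,1,D_n)=(\mathcal T_1^s,1,D_n)\sqcup(\mathcal T_2^s,1,D_n)$ and using the partition of $\mathcal U$. For general $m$ I would apply the two moves simultaneously to all $m$ labeled colored towers --- peel a level off each tall one, moving its label onto the new apex $U$-step; erase each height-$1$ one, relaying its label onto the neighboring $UU$-step or uncolored tower exactly as above --- arriving in $D_{n-m}$ with $m$ labeled $U$-steps. The inverse processes the labeled $U$-steps from left to right: having already rebuilt the part of the path to its left, one reads whether the current labeled $U$-step is a $UU$-step, the apex of a colored tower, or the apex of an uncolored tower, and accordingly inserts a $UD$ or grows a tower.

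The real obstacle throughout is exactly the interaction between these local moves and the two-pass coloring rule: everything rests on the chain description, which is what turns ``only the expected towers change color'' into a theorem rather than a hope. I expect the fiddliest point to be the general-$m$ inverse, where one must see that erasing a height-$1$ tower flips the colors of the towers to its right and that processing the $U$-steps left to right in the inverse is precisely what undoes those flips in the correct order; checking that the relaid labels are pairwise distinct (height-$1$ moves producing $UU$-steps or apices of uncolored towers, taller moves producing apices of colored towers) is then routine.
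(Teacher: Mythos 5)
Your proposal is correct and takes essentially the same route as the paper: the same two local surgeries (delete a labeled colored tower of height $1$ and transfer its label to the adjacent $UU$-step or to the preceding uncolored tower; peel one level off a labeled colored tower of height $\ge 2$), the same left-to-right extension to $m$ labeled towers, and the same identification of $\mathcal{U}^2(p)\cup\mathcal{T}(p)\cup\mathcal{T}^c(p)$ with $\mathcal{U}(p)$ via peaks, with your chain/parity analysis simply making explicit what the paper dismisses as obvious. One small wording fix: in the inverse for a labeled element of $\mathcal{U}^2$, the peak must be reinserted between the two labeled $U$-steps (immediately after the first of them), not after the whole $UU$-step.
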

\begin{proof}
We use $U^s,U^sU^s,t^s$ to represent a $U$-step, a $UU$-step and a
tower labeled by $s$. For a path $p\in D_n$ that has only one
colored tower $t$ labeled by $s$, if the labeled and colored tower
$t$ has height $1$, i.e., $t\in\mathcal{T}_1(p)$, then $t$ is either
located between two $U$-steps, i.e., $Ut^sU\subseteq p$, or $t$
follows an uncolored tower $t_1$ from $\mathcal{T}^c(p)$, i.e.,
$t_1t^s\subseteq p$ where $t_1\in\mathcal{T}^c(p)$. Let
$g_1:(\mathcal{T}_1^s,1,D_n)\rightarrow ((\mathcal{U}^2\cup
\mathcal{T}^c)^s,1,D_{n-1})$ be the map defined as follows. If
$p=\cdots Ut^sU\cdots \in(\mathcal{T}_1^s,1,D_n)$, then
$g_1(p)=\cdots U^sU^s\cdots\in ((\mathcal{U}^2)^s,1,D_{n-1})$. If
$p=\cdots t_1t^s\cdots\in (\mathcal{T}_1^s,1,D_n)$, then
$g_1(p)=\cdots t_1^s\cdots\in ((\mathcal{T}^c)^s,1,D_{n-1})$. Notice
that the tower $t_1$ is also an uncolored tower of the labeled path
$g_1(p)$. This follows from the way we define the colored towers. It
is obvious that the map $g_1$ is a bijection.

For a path $p\in D_n$ that has only one colored tower $t$ labeled by
$s$, if the labeled and colored tower $t$ has height at least $2$,
i.e., $t\in\mathcal{T}_2(p)$, then $t=Ut_2D$ where $t_2$ is a tower.
Let $g_2:(\mathcal{T}_2^s,1,D_n)\rightarrow
(\mathcal{T}^s,1,D_{n-1})$ be the map defined as follows. If
$p=\cdots t^s\cdots=\cdots
(Ut_2D)^s\cdots\in(\mathcal{T}_2^s,1,D_n)$, then $g_2(p)=\cdots
t_2^s\cdots\in(\mathcal{T}^s,1,D_{n-1})$. Notice that the tower
$t_2$ is also a colored tower of the labeled path $g_2(p)$. This
follows from the way we define the colored towers. It is clear that
the map $g_2$ is a bijection.

We will next prove $(\mathcal{T}^s,m,D_n)\simeq
((\mathcal{U}^2\cup\mathcal{T} \cup\mathcal{T}^c)^s,m,D_{n-m})$ for
any $m$. The bijection for the case $m=1$ can be obtained by
combining the bijections $g_1$ and $g_2$, namely, let
$f_1:(\mathcal{T}^s,1,D_n)\rightarrow ((\mathcal{U}^2\cup
\mathcal{T}^c\cup\mathcal{T})^s,1,D_{n-1})$ be the bijection defined
as follows. $f_1(p)=g_1(p)$ if $p\in(\mathcal{T}_1^s,1,D_n)$ and
$f_1(p)=g_2(p)$ if $p\in(\mathcal{T}_2^s,1,D_n)$. We can extend the
bijection $f_1$ to the general bijection $f_m$ from the set
$(\mathcal{T}^s,m,D_n)$ to the set
$((\mathcal{U}^2\cup\mathcal{T}\cup\mathcal{T}^c)^s,m,D_{n-m})$ as
follows. For any $p\in (\mathcal{T}^s,m,D_n)$, suppose $p$ has
$t_1,t_2,\ldots,t_m$ colored towers labeled by $s$ from left to
right, for every labeled tower $t_i$ that has height $1$ and
$Ut_i^sU\subseteq p$, we label the $U$-steps next to $t_i$ by $s$
and remove $t_i$ from $p$, i.e., $U^sU^s\subseteq f_m(p)$. For every
labeled tower $t_i$ that has height $1$ and $t\,t_i^s\subseteq p$,
we label the tower $t$ by $s$ and remove $t_i$ from $p$, i.e.,
$t^s\subseteq f_m(p)$. For every labeled tower $t_i$ that has height
at least $2$, i.e., $t_i^s=(UtD)^s\subseteq p$, we label $t$ by $s$
and remove a $U$-step and a $D$-step from the tower $t_i$, i.e.,
$t^s\subseteq f_m(p)$. In fact the map $f_m$ on the path $p$ is
equivalent to applying the bijection $f_1$ on each labeled tower
$t_i$ of $p$ from left to right. It follows that $f_m$ is a
bijection and therefore $(\mathcal{T}^s,m,D_n)\simeq
((\mathcal{U}^2\cup\mathcal{T} \cup\mathcal{T}^c)^s,m,D_{n-m})$
holds for any $m$. It remains to prove
$\mathcal{U}^2(p)\cup\mathcal{T}(p) \cup\mathcal{T}^c(p)\simeq
\mathcal{U}(p)$ for any $p\in D_n$.

In view of
$\mathcal{T}^c(p)\cup \mathcal{T}(p)=\mathcal{T}^*(p)$ for any $p\in D_n$, we
next show for any $p\in D_n$, $\mathcal{U}^2(p)\cup
\mathcal{T}^*(p)\simeq \mathcal{U}(p)$. Each $U$-step is either part
of a $UU$-step or part of a peak $P$ by considering the step that
follows this $U$-step. Each peak $P$ is contained in only one tower
from $\mathcal{T}^*(p)$ and each tower from $\mathcal{T}^*(p)$ has
only one peak. Therefore $\mathcal{U}(p)\simeq \mathcal{U}^2(p)\cup
\mathcal{T}^*(p)$ for any $p\in D_n$ and the proof is complete.
\end{proof}
For a path $p$, we name the first $U$-step of a tower
$t\in\mathcal{T}_2(p)$ {\it bottom}. Let $\mathcal{B}(p)$ be the set
of bottoms contained in a path $p$, let
$((\mathcal{T}^s,\mathcal{B}^w),(m,r),D_n)$ be the set of elevated
Dyck paths $p\in D_n$ having exactly $m$ colored towers labeled by
$s$ and among these $m$ colored towers there are exactly $r$ bottoms
labeled by $w$. Let $((\mathcal{T}^s,\mathcal{T}^w),(m,r),D_n)$ be
the set of elevated Dyck paths $p\in D_n$ having exactly $m$ colored
towers labeled by $s$ and among these $m$ colored towers there are
exactly $r$ colored towers labeled by $w$. We will prove
\begin{lemma}\label{L:2}
$((\mathcal{T}^s,\mathcal{B}^w),(m,r),D_n)\simeq ((\mathcal{T}^s,\mathcal{T}^w),(m,r),D_{n-r})$.
\end{lemma}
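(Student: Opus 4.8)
The plan is to reuse the height-reduction bijection $g_2$ from the proof of Lemma~\ref{L:1}, now carrying an additional label $w$, applied simultaneously at the $r$ bottoms that are labelled by $w$. Recall that a bottom is the first $U$-step of a colored tower of height $\ge 2$, i.e.\ of some $t\in\mathcal{T}_2(p)$, and that such a tower can be written $t=Ut_2D$ with $t_2$ again a tower, of height one less and hence $\ge 1$. Given $p\in((\mathcal{T}^s,\mathcal{B}^w),(m,r),D_n)$, let $t^{(1)},\dots,t^{(r)}$ be the colored towers among the $m$ labelled by $s$ whose bottom is labelled by $w$, and write $t^{(\ell)}=Ut_2^{(\ell)}D$. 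Define $\phi(p)$ to be the path obtained by replacing, for each $\ell$, the block $Ut_2^{(\ell)}D$ of $p$ by $t_2^{(\ell)}$, keeping the $s$-label on the shortened tower $t_2^{(\ell)}$ and turning the $w$-label of the deleted bottom into a $w$-label of the colored tower $t_2^{(\ell)}$. Each replacement removes one $U$-step and one $D$-step, so $\phi(p)$ has length $2(n-r)+2$; the goal is to show that $\phi$ is a bijection onto $((\mathcal{T}^s,\mathcal{T}^w),(m,r),D_{n-r})$, with inverse $\psi$ that surrounds each colored tower labelled by $w$ by a fresh pair $U\cdots D$ and moves the $w$-label onto the newly created bottom.

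The verification parallels that of $g_2$ in Lemma~\ref{L:1}. Each $t_2^{(\ell)}$ is still a tower of $\phi(p)$: the step in front of $t^{(\ell)}$ (a $U$-step if $t^{(\ell)}$ was colored in Step~$1$, the last step of an uncolored tower if in Step~$2$) is untouched, and the step after $t^{(\ell)}$ is not a $D$-step by maximality, so no merging with a neighbouring tower occurs. For the same reason $t_2^{(\ell)}$ is again a \emph{colored} tower, colored through the same rule as $t^{(\ell)}$ --- this is precisely the claim ``$t_2$ is also a colored tower'' used for $g_2$. Since whether a tower is colored depends only on the step, or uncolored tower, immediately preceding it, every other tower keeps its status; thus $\phi(p)$ still has exactly $m$ colored towers labelled by $s$, of which exactly $r$ carry a $w$-label, and $\phi(p)\in((\mathcal{T}^s,\mathcal{T}^w),(m,r),D_{n-r})$. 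For $\psi$ one checks symmetrically that, around any colored tower $t'$ of $q\in((\mathcal{T}^s,\mathcal{T}^w),(m,r),D_{n-r})$, the block $Ut'D$ can be formed --- a colored tower is never the terminal block of the elevated path $q=U\bar qD$, so there is room for the appended $D$-step --- that $Ut'D$ is then a maximal tower, of height $\ge 2$, colored by the same rule as $t'$, and that no other tower is recoloured. Since distinct towers occupy disjoint steps, and in fact no two colored towers are ever adjacent, the $r$ surgeries are independent, and $\phi$ and $\psi$ are visibly mutually inverse.

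The step I expect to require genuine care is the global assertion used throughout: that the local surgery at a colored tower preserves the partition $\mathcal{T}^*(\cdot)=\mathcal{T}(\cdot)\cup\mathcal{T}^c(\cdot)$ into colored and uncolored towers, in particular that a shortened (or lengthened) colored tower stays colored and that its neighbours are not recoloured. This is exactly the phenomenon invoked without detail for $g_2$ in Lemma~\ref{L:1} and should here be pinned down once and for all; the most easily overlooked sub-case is the boundary one --- checking that a colored tower never sits at the very end of an elevated Dyck path in a position that would leave no room to append the $D$-step required by $\psi$.
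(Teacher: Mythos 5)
Your construction is precisely the paper's one-line bijection: remove the $w$-labelled bottom together with one $D$-step from each $s$-labelled colored tower whose bottom carries $w$, transferring the label $w$ to the shortened tower, with inverse $\bar t^{\,s,w}\mapsto U^w\bar t^{\,s}D$; the coloring- and maximality-preservation checks you spell out are exactly the details the paper leaves implicit, so the proposal matches the paper's proof. One small remark: the worry about ``room for the appended $D$-step'' is vacuous, since $U\cdots D$ is inserted rather than overwritten the result is automatically an elevated Dyck path, and in fact a colored tower \emph{can} be the last tower before the closing $D$-step (e.g.\ the final peak of $UUDUDUDD\in D_3$), so that side claim is both unnecessary and, under that reading, false --- though it affects nothing.
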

\begin{proof}
For a path $p\in ((\mathcal{T}^s,\mathcal{B}^w),(m,r),D_n)$ and for
every colored tower $t\in\mathcal{T}_2(p)$ labeled by $s$ whose
bottom is labeled by $w$, we remove the bottom and a $D$-step from
$t$, and label the remaining colored tower by $s$ and $w$. For a
path $\bar{p}\in ((\mathcal{T}^s,\mathcal{T}^w),(m,r),D_{n-r})$ and
for every colored tower $\bar{t}\in \mathcal{T}(\bar{p})$ labeled by
$s,w$, we replace the tower $\bar{t}^{s,w}$ by the tower
$U^w\bar{t}^sD$. This yields a bijection.
\end{proof}
Now we are in position to prove (\ref{E:koshyLa}).
%%%%%%%%%%
%%%%%%%%%%%%%%%%%%%%%%%%%%%%%%%%%%%%%%%%%%%%%%%%%%%%%%%%%%%%%
%%%%%%%%%%
\begin{proof}
Let $F_n(q)=qN_n(q)$, then (\ref{E:koshyLa}) is equivalent to
\begin{align}\label{E:exin}
\frac{F_n(q)}{(1-q)^{n}}=\frac{q}{1-q}+\sum_{k=1}^{n-1}
\frac{F_{n-k}(q)}{(1-q)^{n-k}}\sum_{m=0}^{k-1}(-1)^m\binom{k-1}{m}
\binom{n-m}{k}\frac{q}{(1-q)^{m+1}},
\end{align}
where $\frac{F_n(q)}{(1-q)^n}=\sum_{k=1}^n
\frac{N_{n,k}q^k}{(1-q)^n}$ counts the number of elevated Dyck paths of
length $2n+2$ where each $U$-step, except the first $U$-step, has weight $\frac{1}{1-q}$ and each
peak $UD$ has weight $q$. Before we proceed to the combinatorial
proof, we first transform $\frac{q}{(1-q)^{m+1}}$ according to our
needs, namely,
\begin{align*}
&\quad\sum_{m=0}^{k-1}(-1)^m\binom{k-1}{m}\binom{n-m}{k}\frac{q}{(1-q)^{m+1}}\\
&=\sum_{i=0}^{k-1}\sum_{m\ge
i}(-1)^m\binom{k-1}{m}\binom{n-m}{k}\binom{m}{i}(\frac{q}{1-q})^{i+1}\\
&=\sum_{m=1}^{k}(-1)^{m-1}\binom{k-1}{m-1}\binom{n-k+1}{m}(\frac{q}{1-q})^{m}.
\end{align*}
Consequently, we can express (\ref{E:exin}) as
\begin{align}\label{E:exin3}
\frac{F_n(q)}{(1-q)^{n}}=\frac{q}{1-q}+\sum_{m=1}^{n-1}(-1)^{m-1}
\sum_{k=m}^{n-m+1}\frac{F_{n-k}(q)}{(1-q)^{n-k}}
\binom{k-1}{m-1}\binom{n-k+1}{m}(\frac{q}{1-q})^{m}.
\end{align}
We observe that adding a peak $P$ right between double $U$ steps
contributes weight $\frac{q}{1-q}$, while adding a peak $P$ right
between $UD$-steps contributes weight $\frac{1}{1-q}$. In contrast
to the proof of (\ref{E:Koshy1}), the reduction from
$\frac{F_n(q)}{(1-q)^n}$ to $\frac{F_{n-1}(q)}{(1-q)^{n-1}}$ by
removing a peak highly depends on the type of the $U$-step we
choose.

Next we will derive an identity for the generating function of
weighted elevated Dyck paths. This can also be obtained by directly
applying the inclusion-exclusion principle. We will show this identity
in both ways. We set $g_{n,k}(\mathcal{T})=\vert\{p\in D_n:
\vert\mathcal{T}(p)\vert=k\}\vert$ and define
$g_{n,k}(\mathcal{T},q)$ to be the generating function of
$g_{n,k}(\mathcal{T})$ that weights each $U$-step, except the first
$U$-step, of an elevated Dyck path $p\in D_n$ by $\frac{1}{1-q}$ and
each peak $UD$ of an elevated Dyck path $p\in D_n$ by $q$. Recall
that we denote by $(\mathcal{M}^s,m,D_n)$ the set of elevated Dyck
paths $p\in D_n$ having exactly $m$ elements from the set
$\mathcal{M}(p)$ labeled by $s$, and that
$a_{n,m}(\mathcal{M})=\vert (\mathcal{M}^s,m,D_n)\vert$.
Furthermore, we denote by $((\cup_{i=1}^k\mathcal{M}_i)^s,m,D_n)$
the set of elevated Dyck paths $p\in D_n$ having exactly $m$
elements from the set $\cup_{i=1}^k\mathcal{M}_i(p)$ labeled by $s$,
and define $a_{n,m}(\cup_{i=1}^k\mathcal{M}_i)=\vert
((\cup_{i=1}^k\mathcal{M}_i)^s,m,D_n)\vert$. Then, analogous to the
proof of (\ref{E:Koshy1}), we consider
$\mathcal{M}=\mathcal{T}$. Let $a_{n,m}(\mathcal{T},q)$ be the
generating function of $a_{n,m}(\mathcal{T})$ where each $U$-step,
except the first $U$-step, of an elevated Dyck path $p\in D_n$ is
weighted by $\frac{1}{1-q}$ and each peak $P$ of an elevated Dyck
path $p\in D_n$ is weighted by $q$. Then
\begin{align*}
a_{n,m}(\mathcal{T})=\sum_{k\ge
m}\binom{k}{m}g_{n,k}(\mathcal{T}),\quad\,
a_{n,m}(\mathcal{T},q)=\sum_{k\ge
m}\binom{k}{m}g_{n,k}(\mathcal{T},q).
\end{align*}
In particular for $m=0$, we have $a_{n,0}(\mathcal{T})=C_n$ and
$a_{n,0}(\mathcal{T},q)=\frac{F_n(q)}{(1-q)^n}$. By introducing the
generating function $G_n(x,q)$ of $g_{n,k}(\mathcal{T},q)$, we
obtain
\begin{align*}
G_n(x,q)=\sum_{k\ge 1}g_{n,k}(\mathcal{T},q)x^k=\sum_{m=0}^{n}
\frac{G_n^{(m)}(1,q)}{m!}(x-1)^m=\sum_{m=0
}^{n}a_{n,m}(\mathcal{T},q)(x-1)^m
\end{align*}
where $G_n^{(m)}(1,q)=\frac{\partial^m\,G_n(x,q)}{\partial
x^m}|_{x=1}$. By setting $x=0$, we get
\begin{eqnarray}\label{E:exin2}
a_{n,0}(\mathcal{T},q)=\frac{F_n(q)}{(1-q)^n}=\sum_{m=1}^n(-1)
^{m-1}a_{n,m}(\mathcal{T},q).
\end{eqnarray}
In fact (\ref{E:exin2}) can also be derived by directly applying the
inclusion-exclusion method. To be precise, we adopt the notations
from \cite{Stanley:ec2}. Let $A_i$ be the set of elevated Dyck paths
in $D_n$ whose $i$-th tower (in the left-to-right order) is a
colored tower. Then $A_1,\ldots,A_n$ are all the subsets of the set
$D_n$ and $A_1\cup\cdots \cup A_n=D_n$. For each subset $T$ of
$[n]$, let $A_T=\bigcap_{i\in T}A_i$ with $A_{\varnothing}=D_n$.
Here $A_T$ is the set of elevated Dyck paths in $D_n$ whose $i$-th
tower (in the left-to-right order) is a colored tower for every
$i\in T$. For $0\le m\le n$, we set $S_m=\sum_{\vert T\vert=m}\vert
A_T\vert$ where the sum runs over all the $m$-subsets $T$ of $[n]$.
Therefore $S_m$ equals to the number of elevated Dyck paths in $D_n$
that have $m$ colored towers labeled by $s$, which is
$a_{n,m}(\mathcal{T})$. So according to the principle of
inclusion-exclusion, the number $\vert \bar{A}_1\cap\cdots\cap
\bar{A}_n\vert$ of elevated Dyck paths in $D_n$ that have no colored
towers equals
\begin{align*}
\vert \bar{A}_1\cap\cdots\cap \bar{A}_n\vert=\sum_{m=0}^{n}(-1)^mS_m
=\sum_{m=0}^n(-1)^ma_{n,m}(\mathcal{T}).
\end{align*}
Since every elevated Dyck path has at least one colored tower, we
have $\vert \bar{A}_1\cap\cdots\cap
\bar{A}_n\vert=g_{n,0}(\mathcal{T})=0$. Consequently,
$a_{n,0}(\mathcal{T})=\sum_{m=1}^n(-1) ^{m-1}a_{n,m}(\mathcal{T})$.
In terms of the weighted elevated Dyck paths, (\ref{E:exin2})
follows.

In order to use (\ref{E:exin2}) to prove (\ref{E:exin3}),
we will need to give an explicit
expression for $a_{n,m}(\mathcal{T},q)$. For any path $p\in D_n$
and any two colored towers $t_1,t_2\in\mathcal{T}(p)$, $t_1$ and
$t_2$ are disjoint. We start by
counting $a_{n,1}(\mathcal{T},q)$, which is the generating function for the elevated
Dyck paths $p\in D_n$ having one colored tower labeled by $s$. Equivalently,
$\vert(\mathcal{T}^s,1,D_n)\vert=a_{n,1}(\mathcal{T})$.
In view of
Lemma~\ref{L:1}, we get $a_{n,1}(\mathcal{T})=a_{n-1,1}(\mathcal{U})$ and
\begin{align}
\nonumber
a_{n,1}(\mathcal{T},q)&=\frac{q}{1-q}a_{n-1,1}(\mathcal{U}^2\cup\mathcal{T}^c,q)
+\frac{1}{1-q}a_{n-1,1}(\mathcal{T},q)\\
\nonumber
&=\frac{q}{1-q}a_{n-1,1}(\mathcal{U},q)+a_{n-1,1}(\mathcal{T},q)
=\sum_{k=1}^{n-1}\frac{q}{1-q}a_{n-k,1}(\mathcal{U},q)+a_{1,1}(\mathcal{T},q)\\
\label{E:an11}&=\sum_{k=1}^{n-1}(n-k+1)\frac{F_{n-k}(q)}{(1-q)^{n-k}}\frac{q}{1-q}
+\frac{q}{1-q}.
\end{align}
Next we will count $a_{n,m}(\mathcal{T},q)$ for $m\ge 2$. For a path
$p$, recall that the first $U$-step of a tower
$t\in\mathcal{T}_2(p)$ is named bottom. For a given path $p\in D_n$
with $m$ colored towers labeled by $s$, let $\mathcal{T}_{2,m}$
denote the subset of these $m$ towers that have height $\geq 2$.
Since each $U$-step has weight $\frac{1}{1-q}$ and each tower in
$\mathcal{T}_{2,m}$ has one bottom, the weight on the bottoms of
path $p\in D_n$ is $(\frac{1}{1-q})^{\vert \mathcal{T}_{2,m}\vert}$,
which is equal to
\begin{eqnarray}\label{E:m2}
(\frac{1}{1-q})^{\vert
\mathcal{T}_{2,m}\vert}=(\frac{q}{1-q})^{\vert
\mathcal{T}_{2,m}\vert} +\sum_{r=1}^{\vert
\mathcal{T}_{2,m}\vert}(-1)^{r-1}\binom{\vert
\mathcal{T}_{2,m}\vert}{r}(\frac{1}{1-q})^{\vert
\mathcal{T}_{2,m}\vert-r}.
\end{eqnarray}
We will separate the weight $(\frac{1}{1-q})^{\vert
\mathcal{T}_{2,m}\vert}$ on the bottoms of path $p$ according to
(\ref{E:m2}). The term $(\frac{q}{1-q})^{\vert
\mathcal{T}_{2,m}\vert}$ on the right-hand side of (\ref{E:m2})
corresponds to the case that each bottom is weighted by
$\frac{q}{1-q}$. Accordingly, we set $f_{n,m,0}(\mathcal{T},q)$ to
be the generating function of elevated Dyck paths $p\in D_n$ that
have $m$ colored towers labeled by $s$ where each bottom has weight
$\frac{q}{1-q}$, each $U$-step, other than the first $U$-step and
the bottom, has weight $\frac{1}{1-q}$ and each peak has weight $q$.
In the same way as for the counting of $a_{n,1}(\mathcal{T},q)$, the
set $(\mathcal{T}^s,m,D_n)$ is in one-to-one correspondence to the
set $(\mathcal{U}^s,m,D_{n-m})$ as shown in Lemma~\ref{L:1}. Therefore,
\begin{align}\label{E:fm0}
a_{n,m}(\mathcal{T})&=a_{n-m,m}(\mathcal{U}),\nonumber\\
f_{n,m,0}(\mathcal{T},q)
&=(\frac{q}{1-q})^ma_{n-m,m}(\mathcal{U},q).
\end{align}
The term $\binom{\vert
\mathcal{T}_{2,m}\vert}{r}(\frac{1}{1-q})^{\vert
\mathcal{T}_{2,m}\vert-r}$ on the right-hand side of (\ref{E:m2})
represents the number of ways to choose $r$ towers from the set
$\mathcal{T}_{2,m}$ where each bottom of these $r$ towers has weight
$1$ and each bottom of the remaining
$(\vert\mathcal{T}_{2,m}\vert-r)$ towers has weight $\frac{1}{1-q}$.
Accordingly, for $r\ge 1$ we define $f_{n,m,r}(\mathcal{T})$ to be
the number of elevated Dyck paths $p\in D_n$ that have $m$ colored
towers labeled by $s$ and among these $m$ colored towers there are
exactly $r$ bottoms labeled by $w$, i.e.,
$\vert((\mathcal{T}^s,\mathcal{B}^w),(m,r),D_n)\vert=f_{n,m,r}(\mathcal{T})$.
Furthermore, let $f_{n,m,r}(\mathcal{T},q)$ be the generating
function of $f_{n,m,r}(\mathcal{T})$ where each bottom labeled by
$w$ has weight $1$, each $U$-step, other than the bottom labeled by
$w$ and the first $U$-step, has weight $\frac{1}{1-q}$ and each peak
has weight $q$. From Lemma~\ref{L:2}, we obtain
$f_{n,m,r}(\mathcal{T})=\vert((\mathcal{T}^s,\mathcal{T}^w),(m,r),D_{n-r})\vert$
and therefore
\begin{align}\label{E:fmr}
f_{n,m,r}(\mathcal{T})=\binom{m}{r}a_{n-r,m}(\mathcal{T}),\quad
f_{n,m,r}(\mathcal{T},q)=\binom{m}{r}a_{n-r,m}(\mathcal{T},q).
\end{align}
By multiplying both sides of (\ref{E:m2}) by the weights on the
$U$-steps (except the first $U$-step and the bottoms) and the
weights on the peaks of a path $p\in D_n$, and summing over all the
paths in $D_n$, we get
\begin{align*}
a_{n,m}(\mathcal{T},q)&=f_{n,m,0}(\mathcal{T},q)
+\sum_{r=1}^{m}(-1)^{r-1}f_{n,m,r}(\mathcal{T},q).
\end{align*}
Together with (\ref{E:fm0}) and (\ref{E:fmr}), we have
\begin{align}\label{E:recLas}
\nonumber
a_{n,m}(\mathcal{T},q)&=(\frac{q}{1-q})^ma_{n-m,m}(\mathcal{U},q)
+\sum_{r=1}^{m}(-1)^{r-1}\binom{m}{r}a_{n-r,m}(\mathcal{T},q),\\
&=(\frac{q}{1-q})^m\binom{n-m+1}{m} \frac{F_{n-m}(q)}{(1-q)^{n-m}}
+\sum_{r=1}^{m}(-1)^{r-1}\binom{m}{r}a_{n-r,m}(\mathcal{T},q).
\end{align}
We will employ generating functions to solve (\ref{E:recLas}).
Let $A_m(x)$ be the generating function for $a_{n,m}(\mathcal{T},q)$
and $m\ge 2$, i.e., the $n$-th coefficient of $A_m(x)$ -- denoted by
$[x^n]A_m(x)$ -- is $a_{n,m}(\mathcal{T},q)$. By multiplying both sides of
(\ref{E:recLas}) by $x^n$ and summing over all $n$, we obtain
\begin{align*}
A_m(x)&=(1-x)^{-m}\sum_{n\ge
m}\binom{n-m+1}{m}(\frac{q}{1-q})^m\frac{F_{n-m}(q)}{(1-q)^{n-m}}x^n,\\
a_{n,m}(\mathcal{T},q)&=[x^n]A_m(x)=\sum_{i\ge
0}\binom{n-m-i+1}{m}(\frac{q}{1-q})^m\binom{m+i-1}{m-1}
\frac{F_{n-m-i}(q)}{(1-q)^{n-m-i}}\\
&=\sum_{k=m}^{n-m+1}\binom{n-k+1}{m}\binom{k-1}{m-1}
\frac{F_{n-k}(q)}{(1-q)^{n-k}}(\frac{q}{1-q})^m.
\end{align*}
In combination of (\ref{E:exin2}) and (\ref{E:an11}), the
proof of (\ref{E:exin3}) is complete.
\end{proof}
\section{Generalize (\ref{E:koshyLa}) to Ballot numbers}\label{S:ball}
The ballot numbers $B_{n,r}$ are defined by
$B_{n,r}=\frac{r+1}{2n+r+1}\binom{2n+r+1}{n}$. They count the number
of paths $p$ from $(0,0)$ to $(2n+r,r)$ with allowed $U$-step and
$D$-step and each path $p$ can be decomposed as $p_1Up_2\cdots
Up_{r+1}$ with $p_i$, $1\le i\le r+1$, a Dyck path. Let
$\frac{M_{n,r}(q)}{(1-q)^n}$ be the generating function for the path
$p_1p_2\cdots p_{r+1}$ that has total length $2n$ where each peak of
$p_1p_2\cdots p_{r+1}$ has weight $q$ and each $U$-step of
$p_1p_2\cdots p_{r+1}$ has weight $\frac{1}{1-q}$. Then we can
generalize (\ref{E:koshyLa}) to an equation for $M_{n,r}(q)$,
i.e.,
\begin{eqnarray*}
\frac{M_{n,r}(q)}{(1-q)^{n}}=\frac{q(r+1)}{1-q}+\sum_{m=1}^{n-1}(-1)^{m-1}
\sum_{k\ge m}\frac{q^mM_{n-k,r}(q)}{(1-q)^{n-k+m}}
\binom{k-1}{m-1}\binom{n-k+1+r}{m}.
\end{eqnarray*}
The proof follows similarly to that for (\ref{E:koshyLa}) and is
omitted here. Next we will prove (\ref{E:cnq1in1}) by involution and
the inclusion-exclusion method on the partitions and answer Andrews'
questions on the property of $T_r(n,q)$ given in (\ref{E:cnq1in}).
\section{Properties of $T_r(n,q)$}\label{S:Tr}
We say a polynomial $f(x)=a_nx^n+\cdots+a_0$ is reciprocal if
$f(x)=x^nf(\frac{1}{x})$, i.e., $a_r=a_{n-r}$. A sequence
$a_0,a_1,\ldots,a_n$ of real numbers is said to be unimodal if for
some $0\le j\le n$ we have $a_0\le\cdots\le a_{j-1}\le a_j\ge
a_{j+1}\ge \cdots \ge a_n$. We say a polynomial
$f(x)=a_nx^n+\cdots+a_1x+a_0$ is unimodal if the sequence
$a_0,a_1,\ldots, a_n$ is unimodal. If $f(x)$ and $g(x)$ are unimodal
and reciprocal polynomials with nonnegative coefficients, then
$f(x)g(x)$ is also unimodal and reciprocal. Here we say $f(x)$ is a
positive polynomial if all the coefficients of $f(x)$ are
nonnegative. We say the polynomial $f(x)$ has nonnegative
coefficients (resp. nonpositive coefficients) up to $x^r$ if for any
$0\le i\le r$, the coefficient of $x^i$ in the polynomial $f(x)$ is
nonnegative (resp. nonpositive). $\omega\in\mathbb{C}$ is a
primitive $k$-th root of unity if and only if $\omega^k=1$ and for
any $d\in\mathbb{Z}^+$ and $d<k$, $\omega^d\ne 1$. The $k$-th
cyclotomic polynomial $\Phi_k(x)\in\mathbb{Z}[x]$ is the polynomial
whose roots are the primitive $k$-th roots of unity. Let $\omega$ be
any primitive $k$-th root of unity, and the polynomial $f(x)$
satisfies $f(\omega)=0$. Then $\Phi_k(x)\mid f(x)$. The $q$-Lucas
theorem is the following:
%%%%%%%%%%%
%%%%%%%%%%%%%%%%%%%%%%%%%%%%%%%%%%%%%%%%%
%%%%%%%%%%%
\begin{proposition}[$q$-Lucas theorem]
Let $m,k,d$ be positive integers, and write $m=ad+b$ and $k=rd+s$,
where $0\le b,s\le d-1$. Let $\omega$ be any primitive $d$-th root
of unity. Then
\begin{align*}
{m\brack k}_{\omega}=\binom{a}{r}{b\brack s}_{\omega}.
\end{align*}
\end{proposition}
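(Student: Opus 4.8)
The plan is to derive the $q$-Lucas theorem from the finite $q$-binomial theorem
\[
\prod_{j=0}^{m-1}(1+xq^{j})=\sum_{k=0}^{m}q^{\binom{k}{2}}{m\brack k}_{q}x^{k},
\]
evaluated at $q=\omega$. The first ingredient is the factorization $\prod_{j=0}^{d-1}(t-\omega^{j})=t^{d}-1$, which holds because $1,\omega,\dots,\omega^{d-1}$ run over all $d$-th roots of unity; putting $t=-1/x$ and clearing denominators gives $\prod_{j=0}^{d-1}(1+x\omega^{j})=1-(-1)^{d}x^{d}$.

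Next I would write $m=ad+b$ and use $\omega^{d}=1$ to regroup $\prod_{j=0}^{m-1}(1+x\omega^{j})$ into $a$ complete blocks of length $d$ together with one residual block of length $b$, obtaining
\[
\prod_{j=0}^{m-1}(1+x\omega^{j})=\bigl(1-(-1)^{d}x^{d}\bigr)^{a}\prod_{j=0}^{b-1}(1+x\omega^{j}).
\]
Expanding the first factor by the ordinary binomial theorem and the second by the finite $q$-binomial theorem at $q=\omega$, and comparing with the left-hand expansion $\sum_{k}\omega^{\binom{k}{2}}{m\brack k}_{\omega}x^{k}$, I extract the coefficient of $x^{k}$ for $k=rd+s$ with $0\le s\le d-1$. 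Since the representation $k=rd+s$ is exactly the Euclidean division of $k$ by $d$, only one term survives on the right, and one gets
\[
{m\brack k}_{\omega}=\binom{a}{r}\bigl(-(-1)^{d}\bigr)^{r}\,\omega^{\binom{s}{2}-\binom{k}{2}}\,{b\brack s}_{\omega},
\]
both sides being zero when $s>b$ or $r>a$.

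It then remains to verify that the scalar $\bigl(-(-1)^{d}\bigr)^{r}\omega^{\binom{s}{2}-\binom{k}{2}}$ equals $1$. From $k=rd+s$ one computes $\binom{k}{2}-\binom{s}{2}=\tfrac12\,rd(rd+2s-1)=\binom{rd}{2}+rds$; since $\omega^{rds}=1$, the claim reduces to $\omega^{\binom{rd}{2}}=(-1)^{(d+1)r}$. I would finish this by splitting on the parity of $d$: when $d$ is odd, $\binom{rd}{2}$ is a multiple of $d$ (treat $r$ even and $r$ odd separately), so the left side is $1$ and so is the right side because $d+1$ is even; when $d$ is even, write $d=2e$, so $\binom{rd}{2}\equiv-re\pmod{d}$ and therefore $\omega^{\binom{rd}{2}}=(\omega^{d/2})^{-r}=(-1)^{r}$, using that a primitive $d$-th root satisfies $\omega^{d/2}=-1$, which matches $(-1)^{(d+1)r}=(-1)^{r}$.

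I expect the only real obstacle to be this final bookkeeping with roots of unity and signs; the decisive facts are $\omega^{d/2}=-1$ for even $d$ and the control of $\binom{rd}{2}$ modulo $d$. Everything preceding it is a routine specialization of a classical identity, so this gives a clean self-contained proof.
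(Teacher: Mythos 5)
Your proposal is correct, and in fact it supplies something the paper does not: the paper only \emph{states} the $q$-Lucas theorem as a known tool (it is then used to show $\Phi_x(q)$ divides certain Gaussian binomials) and gives no proof of it, so there is no in-paper argument to compare against. Your route is the classical generating-function one: evaluate Gauss's identity $\prod_{j=0}^{m-1}(1+xq^j)=\sum_k q^{\binom{k}{2}}{m\brack k}_q x^k$ at $q=\omega$, use $\prod_{j=0}^{d-1}(1+x\omega^j)=1-(-1)^d x^d$ to collapse the $a$ full blocks, expand the residual block of length $b$, and extract the coefficient of $x^{rd+s}$, which is unambiguous because $0\le s\le d-1$ forces the Euclidean decomposition of the exponent. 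I checked the bookkeeping you flagged as the only delicate point: indeed $\binom{k}{2}-\binom{s}{2}=\binom{rd}{2}+rds$, the term $\omega^{rds}$ dies, and $\omega^{\binom{rd}{2}}=(-1)^{(d+1)r}$ holds in both parity cases (for odd $d$ the exponent $\binom{rd}{2}$ is a multiple of $d$; for even $d=2e$ one has $\binom{rd}{2}\equiv -re \pmod d$ and $\omega^{d/2}=-1$ since $\omega$ is primitive), so the scalar $\bigl(-(-1)^d\bigr)^r\omega^{\binom{s}{2}-\binom{k}{2}}$ is $1$ as required; the degenerate cases $s>b$ and $r>a$ are also correctly disposed of, both sides vanishing. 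The net effect is a clean, self-contained proof of a proposition the paper imports without justification, which is a reasonable addition rather than a deviation from the paper's method.
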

%%%%%%%%%%%
%%%%%%%%%%%%%%%%%%%%%%%%%%%%%%%%%%%%%%%%%
%%%%%%%%%%%
\begin{theorem}\label{T:1}
$T_r(n,q)$ is a polynomial in $q$. If $n$ is even,
$T_r(n,q)$ is a positive polynomial. If $n$ is odd, $(1+q)T_r(n,q)$
is a positive polynomial. In case $n=2r-1$, $T_{r}(2r-1,-q)$ is a
positive polynomial.
\end{theorem}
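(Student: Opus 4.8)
The plan is to analyze
\[
T_r(n,q)=q^{r^2-r}\,\frac{(-q^{n-r+1};q)_r}{(-q;q)_r}\cdot{n-r+1\brack r}_q\cdot C_{n-r}(q)
\]
by first rewriting the rational prefactor $\frac{(-q^{n-r+1};q)_r}{(-q;q)_r}$ in a form that manifestly separates its numerator and denominator, and then proving divisibility via cyclotomic polynomials. First I would observe that $(-q;q)_r=\prod_{i=1}^{r}(1+q^i)$ and $(-q^{n-r+1};q)_r=\prod_{i=1}^{r}(1+q^{n-r+i})$, and that $1+q^j=\frac{1-q^{2j}}{1-q^j}=\frac{[2j]_q}{[j]_q}$. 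Hence the prefactor is a ratio of products of cyclotomic polynomials, and to prove $T_r(n,q)\in\mathbb{Z}[q]$ it suffices to show that every cyclotomic factor $\Phi_d(q)$ appearing in the denominator $(-q;q)_r$ (to some multiplicity) already divides $q^{r^2-r}(-q^{n-r+1};q)_r{n-r+1\brack r}_q C_{n-r}(q)$ to at least that multiplicity. The $q$-Catalan number $C_{n-r}(q)=\frac{1}{[n-r+1]_q}{2(n-r)\brack n-r}_q$ is already known to be a polynomial, so the only possible pole comes from the $(-q;q)_r$ factor. For a primitive $d$-th root of unity $\omega$, $\Phi_d$ divides $1+q^i$ exactly when $i$ is an odd multiple of $d/\gcd(d,2)$ in the appropriate sense — more precisely $\omega$ is a root of $1+q^i$ iff $\omega^i=-1$ iff $d$ is even and $i\equiv d/2\pmod d$. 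So I would fix an even $d=2e$ and count, using the $q$-Lucas theorem (the stated Proposition) evaluated at a primitive $e$-th or $2e$-th root of unity, the multiplicity of $\Phi_d$ in $(-q;q)_r$, in $(-q^{n-r+1};q)_r$, in ${n-r+1\brack r}_q$, and in $C_{n-r}(q)$, and check that numerator multiplicity $\geq$ denominator multiplicity. The factor $q^{r^2-r}$ only affects $\Phi_1$-type considerations (the root $q=0$), which I would handle separately by noting $r^2-r\geq 0$.

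For the positivity statements I would pass to a combinatorial or generating-function model. The cleanest route: writing $\frac{1+q^{n-r+i}}{1+q^i}$ and pairing terms, one can rewrite $T_r(n,q)$ (after the polynomial cancellation above is established) as a product of the always-nonnegative polynomial $q^{r^2-r}{n-r+1\brack r}_q C_{n-r}(q)$ with the rational function $\prod_{i=1}^r\frac{1+q^{n-r+i}}{1+q^i}$. When $n$ is even, $n-r+i$ and $i$ have opposite parities as $i$ ranges, and I would argue that the product telescopes/reorganizes into $\prod (1+q^{a_j})$ over suitable exponents $a_j$, or more robustly into a ratio $\frac{(q^2;q^2)_?}{(q;q)_?}\cdot(\text{monomial})$ which is a positive polynomial because it equals a Gaussian binomial in base $q^2$ times a power of $q$; multiplying positive polynomials keeps positivity. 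When $n$ is odd the parities align so that one leftover factor $(1+q)$ sits in the denominator; clearing it gives that $(1+q)T_r(n,q)$ is the product of positive polynomials, hence positive. For the special case $n=2r-1$: here $n-r+1=r$, so ${n-r+1\brack r}_q={r\brack r}_q=1$ and $C_{n-r}(q)=C_{r-1}(q)$, and the prefactor becomes $\frac{(-q;q)_r}{(-q;q)_r}=1$ evaluated naively — but one must be careful because $q\mapsto -q$ changes $1+q^i$ to $1+(-1)^iq^i$, so I would substitute $q\to -q$ into the already-simplified expression, track the sign changes $1+q^i\mapsto 1+(-q)^i$, and verify the resulting finite product of $1\pm q^i$ factors (times $q^{r^2-r}(-1)^{?}C_{r-1}(-q)$) is positive; note $C_{r-1}(-q)$ need not be positive on its own, so the cancellation with the sign from $q^{r^2-r}$ and from the $(1-q^{\text{odd}})$ factors is essential.

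The main obstacle I anticipate is the bookkeeping in the positivity argument — specifically, showing that $\prod_{i=1}^r\frac{1+q^{n-r+i}}{1+q^i}$, after the guaranteed polynomial cancellation, regroups into a genuinely positive polynomial (or a positive polynomial divided by a single $(1+q)$ in the odd case) rather than merely a Laurent polynomial with integer coefficients. Getting the parity analysis exactly right — which residues $i\bmod 2$ occur, how the shift by $n-r$ interacts with them, and which cyclotomic multiplicities survive — is where the argument is delicate and where the hypotheses "$n$ even" versus "$n$ odd" versus "$n=2r-1$" genuinely bite. A clean way to sidestep some of this is to prove the identity $\prod_{i=1}^{r}\frac{1+q^{n-r+i}}{1+q^i}{n-r+1\brack r}_q = {N\brack M}_{q}\cdot(\text{something})$ for appropriate $N,M$ depending on the parity of $n$, reducing everything to the known positivity of Gaussian binomial coefficients; establishing that identity is the crux and I would attempt it by induction on $r$ or by the $q$-Vandermonde / $q$-Chu–Vandermonde machinery.
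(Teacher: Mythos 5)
Your polynomiality plan (counting multiplicities of each cyclotomic factor $\Phi_d$ in numerator and denominator via roots of unity and the $q$-Lucas theorem) is workable in principle but is only a sketch; note that the paper gets polynomiality much more cheaply, by combining the two equivalent forms $T_r(n,q)=q^{r^2-r}\frac{1}{[n]_q}{n\brack r}_{q^2}{2n-2r\brack n-1}_q=q^{r^2-r}{n-1\brack r}_{q^2}\frac{1+q^n}{[n-1]_q}{2n-2r-1\brack n-2}_q$ with the identity $[n]_q-q[n-1]_q=1$, which exhibits $T_r(n,q)$ as a difference of two products of Gaussian binomials. The genuine gap, however, is in your positivity argument. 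Its crux is the hoped-for regrouping of $\prod_{i=1}^r\frac{1+q^{n-r+i}}{1+q^i}\,{n-r+1\brack r}_q\,C_{n-r}(q)$ into a product of manifestly positive factors (powers of $q$, factors $1+q^a$, Gaussian binomials in base $q^2$), possibly after clearing one $(1+q)$ when $n$ is odd. You neither state this identity precisely nor prove it, and there is strong evidence that no such manifestly positive factorization exists in the required generality: for example $T_3(6,q)=q^6(1+q^4)(1+q^6)[5]_q\,\Phi_{10}(q)$, so any cyclotomic bookkeeping leaves you with non-positive factors such as $\Phi_{10}(q)=1-q+q^2-q^3+q^4$, and positivity only emerges after a further, non-obvious recombination. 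More tellingly, positivity of $T_r(n,q)$ itself for odd $n\ge 2r$ is exactly Andrews' open question, which the paper does not resolve (it only proves positivity of $(1+q)T_r(n,q)$ and formulates unimodality conjectures); if your telescoping-to-Gaussian-binomials identity held, it would settle that open problem, which is a strong sign the identity is not available. The paper's actual positivity proof is of a different nature: it factors $T_r(n,q)$ using $d=\gcd(n,r)$ and $m=\gcd(n,2r-1)$, invokes the nontrivial Brunetti--Del Lungo equipartition theorem to get positivity of $\frac{[d]_{q^2}}{[n]_{q^2}}{n\brack r}_{q^2}$ and $\frac{[m]_q}{[2n-2r+1]_q}{2n-2r+1\brack n}_q$, proves $\frac{1}{[2d]_q}{2n-2r\brack n-1}_q$ is a polynomial via $q$-Lucas, and then deduces nonnegativity of $\frac{(1\pm q)(1+q^n)}{1-q^{2d}}{2n-2r\brack n-1}_q$ by a ``nonnegative up to half degree'' argument based on unimodality of ${2n-2r\brack n-1}_q$ combined with reciprocity --- ingredients your proposal does not supply.

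There is also a concrete slip in your treatment of the case $n=2r-1$: there $n-r+1=r$, so the prefactor is $\frac{(-q^{r};q)_r}{(-q;q)_r}=\prod_{i=1}^{r}\frac{1+q^{r+i-1}}{1+q^i}$, not $\frac{(-q;q)_r}{(-q;q)_r}=1$, and ${n-r+1\brack r}_q=1$ as you say. Your plan of substituting $q\to-q$ and ``tracking signs'' against $C_{r-1}(-q)$ does not reach a positive expression; the paper instead simplifies to $T_r(2r-1,-q)=q^{r^2-r}{2r-1\brack r}_{q^2}\frac{1+q}{1+q^{2r-1}}=q^{r^2-r}[2r-1]_q\,C_{r-1}(q^2)$, where positivity is immediate precisely because the $q$-Catalan number ends up evaluated at $q^2$ rather than at $-q$. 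As it stands, your proposal identifies the right objects but leaves the decisive positivity mechanism (equipartition plus unimodality/reciprocity, or anything equivalent) unproved.
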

%%%%%%%%%%%
%%%%%%%%%%%%%%%%%%%%%%%%%%%%%%%%%%%%%%%%%
%%%%%%%%%%%
\begin{proof}
We can simplify $T_r(n,q)$ defined in (\ref{E:cnq1in}) into
\begin{align}\label{E:genT}
T_r(n,q)&=q^{r^2-r}\frac{1}{[n]_q}{n\brack r}_{q^2}{2n-2r\brack
n-1}_q\\
&=q^{r^2-r}{n-1\brack r}_{q^2}\frac{(1+q^n)}{[n-1]_q}{2n-2r-1\brack
n-2}_q\label{E:genT2} .
\end{align}
Consequently from (\ref{E:genT}) and (\ref{E:genT2}) we have
for $r\ge 1$,
\begin{align}\label{E:Tr21}
T_r(n,q)&=([n]_q-q[n-1]_q)T_r(n,q)\nonumber\\&=q^{r^2-r}{n\brack
r}_{q^2}{2n-2r\brack n-1}_q-q^{r^2-r}{n-1\brack
r}_{q^2}(q+q^{n+1}){2n-2r-1\brack n-2}_q
\end{align}
which implies $T_r(n,q)$ is a polynomial.

We will next show the polynomial $T_r(n,q)$ is positive for even $n$
and the polynomial $(1+q)T_r(n,q)$ is positive for odd $n$. Let
$d=\gcd(n,r)$, then
\begin{align*}
T_r(n,q)&=q^{r^2-r}{n\brack r}_{q^2}\frac{[d]_{q^2}}{[n]_{q^2}}
\frac{[n]_{q^2}}{[d]_{q^2}[2n-2r+1]_q} {2n-2r+1 \brack n}_q\\
&=q^{r^2-r}{n\brack r}_{q^2}\frac{[d]_{q^2}}{[n]_{q^2}}
\frac{(1+q^n)(1-q)}{1-q^{2d}}{2n-2r\brack n-1}_q.
\end{align*}
It has been proved by Brunetti {\it et al.} that ${n\brack
r}_{q^2}\frac{[d]_{q^2}}{[n]_{q^2}}$ is a positive polynomial based
on the fact that polynomial $[d]_{q}{n\brack r}_{q}$ is unimodal and
reciprocal \cite{Brunetti:01}. In the same way, let $m=(n,2r-1)$,
then $\frac{[m]_q}{[2n-2r+1]_q}{2n-2r+1 \brack n}_q$ is a positive
polynomial. It remains to prove
$\frac{(1+q^n)(1-q)}{1-q^{2d}}{2n-2r\brack n-1}_q$ is a positive
polynomial for even $n$ and
$\frac{(1+q^n)(1-q^2)}{1-q^{2d}}{2n-2r\brack n-1}_q$ is a positive
polynomial for odd $n$. Recall that $m=(n,2r-1)$, first we observe
$m$ must be odd and therefore
\begin{align*}
\frac{(1+q^n)(1-q)}{1-q^{2d}}{2n-2r\brack n-1}_q
=\frac{[m]_{q}[m]_{-q}}{[2n-2r+1]_q}{2n-2r+1\brack n}_q
\frac{[n]_{q^2}}{[m]_{q^2}[d]_{q^2}}.
\end{align*}
Here $\frac{[n]_q}{[m]_q[d]_q}$ is a polynomial since $\gcd(m,d)=1$
and $m\vert n$, $d\vert n$. Together with the fact that
$\frac{[m]_q}{[2n-2r+1]_q}{2n-2r+1 \brack n}_q$ is a polynomial, it
follows that $\frac{(1+q^n)(1-q)}{1-q^{2d}}{2n-2r\brack n-1}_q$ is a
polynomial. We next prove $\frac{(1+q^n)(1-q)}{1-q^{2d}}{2n-2r\brack
n-1}_q$ is a positive polynomial if $n$ is even. If $n$ is even,
then $\frac{1-q}{1-q^{2d}}{2n-2r\brack n-1}_q$ is a polynomial since
for any $x\mid(2d)$ and $x>1$, $x\mid (2n-2r)$ and $x\nmid (n-1)$.
If not, $x\mid(n-1)$, then $x\mid\gcd(n-1,2)=1$, contradicting the
assumption. Thus by using the $q$-Lucas theorem,
$\frac{1-q}{1-q^{2d}}{2n-2r\brack n-1}_q$ is a polynomial. To be
precise, for any $x\in\mathbb{Z}^+$ such that $x|(2d)$ and $x>1$,
suppose $2n-2r=c_1x$, $n-1=c_2x+t_1$ for some $t_1\ne 0$, let $\rho$
be any primitive $x$-th root of unity, then by applying the
$q$-Lucas theorem, we have ${2n-2r\brack n-1}_\rho={c_1x\brack
c_2x+t_1}_{\rho}=\binom{c_1}{c_2}{0\brack t_1}_{\rho}=0$. This shows
for any $x\mid (2d)$ and $x>1$, $\Phi_{x}(q)|{2n-2r\brack n-1}_q$.
Since $[2d]_q=\prod_{\substack{x|(2d)\\x>1}}\Phi_{x}(q)$ and any two
cyclotomic polynomials are relatively prime, we can conclude that
$\prod_{\substack{x|(2d)\\x>1}}\Phi_{x}(q)|{2n-2r\brack n-1}_q$ and
therefore $\frac{1-q}{1-q^{2d}}{2n-2r\brack
n-1}_q=\frac{1}{[2d]_q}{2n-2r\brack n-1}_q$ is a polynomial. In view
of the unimodality of polynomial ${2n-2r\brack n-1}_q$,
$(1-q){2n-2r\brack n-1}_q$ has nonnegative coefficients up to
$q^{\lfloor\frac{(n-1)(n-2r+1)}{2}\rfloor}$. Therefore we expand
$\frac{1}{1-q^{2d}}$ as a power series at $q=0$ and obtain
\begin{align*}
\frac{(1-q)}{1-q^{2d}}{2n-2r\brack n-1}_q&=(1-q){2n-2r\brack n-1}_q
+q^{2d}(1-q){2n-2r\brack n-1}_q+\cdots
\end{align*}
which implies that the polynomial
$\frac{(1-q)}{1-q^{2d}}{2n-2r\brack n-1}_q$ has nonnegative
coefficients up to $q^{\lfloor\frac{(n-1)(n-2r+1)}{2}\rfloor}$.
Together with the reciprocity of $\frac{(1-q)}{1-q^{2d}}{2n-2r\brack
n-1}_q$, we conclude that $\frac{(1-q)}{1-q^{2d}}{2n-2r\brack
n-1}_q$ has nonnegative coefficients and therefore $T_r(n,q)$ is a
positive polynomial if $n$ is even.

In the same way, we can prove
that $\frac{(1-q^2)}{1-q^{2d}}{2n-2r\brack n-1}_q$ is a positive
polynomial if $n$ is odd. It implies $(1+q)T_r(n,q)$ is a positive polynomial
if $n$ is odd.

By setting $q\rightarrow-q$ and $n\rightarrow2r-1$, we
can obtain
\begin{align*}
T_{r}(2r-1,-q)=q^{r^2-r}{2r-1 \brack
r}_{q^2}\frac{1+q}{1+q^{2r-1}}=q^{r^2-r}[2r-1]_q\,C_{r-1}(q^2),
\end{align*}
which indicates $T_{r}(2r-1,-q)$ has nonnegative coefficients.
\end{proof}
\section{Proof of (\ref{E:cnq1in1})}\label{S:prove2}
We transform $T_r(n,q)$ from (\ref{E:Tr21}) into
\begin{align}\label{E:Tr22}
T_r(n,q)&=q^{r^2-r}{n-1\brack r-1}_{q^2}{2n-2r+1\brack
n}_q-q^{r^2-r}{n-1\brack r}_{q^2}q{2n-2r-1\brack n-2}_q\\
&+q^{r^2+r}{n-1\brack r}_{q^2}{2n-2r-1\brack n}_q \mbox{ valid for }
n\ge 2r+1\nonumber,\mbox{ and}\\
\nonumber T_r(n,q)&=q^{r^2-r}{n-1\brack r-1}_{q^2}{2n-2r+1\brack
n}_q-q^{r^2-r}{n-1\brack r}_{q^2}q{2n-2r-1\brack n-2}_q\\
&\mbox{valid for } 2r-1\le n\le 2r\nonumber.
\end{align}
For simplicity, we set $a_{n}^{(r)}=q^{r^2+r}{n-1\brack
r}_{q^2}{2n-2r-1\brack n}_q$. Then we can write
$T_r(n,q)=a_{n}^{(r-1)}+a_{n}^{(r)}-q^{r^2-r}{n-1\brack
r}_{q^2}q{2n-2r-1\brack n-2}_q$ for $n\ge 2r+1$ and
$T_r(n,q)=a_{n}^{(r-1)}-q^{r^2-r}{n-1\brack r}_{q^2}q{2n-2r-1\brack
n-2}_q$ for $2r-1\le n\le 2r$. First we observe
\begin{align*}
\quad &\sum_{r=1}^{\lfloor\frac{n+1}{2}\rfloor-1}(-1)^{r-1}
(a_{n}^{(r-1)}+a_{n}^{(r)})
+(-1)^{\lfloor\frac{n+1}{2}\rfloor-1}a_n^{(\lfloor\frac{n+1}{2}\rfloor-1)}\\
&=a_{n}^{(0)}={2n-1\brack n}_q=C_n(q)+q{2n-1\brack n-2}_q
\end{align*}
and it remains to prove
\begin{align}\label{E:invT}
{2n-1\brack
n-2}_q=\sum_{r=1}^{\lfloor\frac{n+1}{2}\rfloor}(-1)^{r-1}q^{r^2-r}{n-1\brack
r}_{q^2}{2n-2r-1\brack n-2}_q.
\end{align}
We interpret (\ref{E:invT}) in terms of partitions. A
partition $\lambda$ is defined as a finite sequence of nonnegative
integers $(\lambda_1,\lambda_2,\cdots,\lambda_m)$ in the weakly
decreasing order $\lambda_1\ge \lambda_2\ge \cdots\ge\lambda_m$.
Each $\lambda_i\ne 0$ is called a part of $\lambda$. The number and
the sum of parts of $\lambda$ are denoted by $\ell(\lambda)$ and
$\vert\lambda\vert$, respectively. The partition-theoretic
interpretation of the $q$-binomial ${n+k\brack k}_q$ is ${n+k\brack
k}_q=\sum_{\substack{\ell(\lambda)\le k\\\lambda_1\le
n}}q^{\vert\lambda\vert}$ as given in \cite{Andrews:par}. Therefore
we have
\begin{align}\label{E:partheo}
q^{n-2r+1}{2n-2r-1\brack n-2}_q&=\sum_{\substack{\ell(\nu)=
n-2r+1\\\nu_1\le n-1}}q^{\vert\nu\vert}, \quad q^{r^2+r}{n-1\brack
r}_{q^2}=\sum_{\substack{\mu_1>\mu_2>\cdots\mu_r\ge 1\\\mu_i\le
n-i}}q^{2\vert\mu\vert}.
\end{align}
Given two partitions $\mu$ and $\nu$, say $\mu=(\mu_1,\ldots,\mu_m)$
and $\nu=(\nu_1,\ldots,\nu_n)$, let $\mu^2\cup\nu$ be the partition
whose parts are $\mu_1,\mu_1,\ldots,\mu_m,\mu_m,\nu_1,\ldots,\nu_n$
in the decreasing order. Let $\mu\cup\nu$ be the partition whose
parts are $\mu_1,\ldots,\mu_m,\nu_1,\ldots,\nu_n$ in the decreasing
order, let $\nu\backslash (\mu_1,\ldots,\mu_m)$ be the partition
obtained from $\nu$ by removing the parts equal to
$\mu_1,\ldots,\mu_m$.

For any pair $(\mu,\nu)$ such that $n-1\ge\mu_1>\cdots>\mu_r\ge 1$
where $\mu_i\le n-i$ and $n-1\ge\nu_1\ge \cdots\ge \nu_{n-2r+1}\ge
1$, let $x$ be the smallest part in the partition $\mu^2\cup\nu$
with repetition, we shall construct a new pair $(\mu',\nu')$ as
follows:
\begin{enumerate}
\item{If $\mu_r=x$, then we choose
$\mu'=(\mu_1,\cdots,\mu_{r-1})$ and $\nu'=\nu\cup (x,x)$.}
\item{Otherwise, $\nu_{n-2r}=\nu_{n-2r+1}=x$, we choose
$\mu'=(\mu_1,\cdots,\mu_{r},x)$ and $\nu'=\nu\backslash(x,x)$.}
\end{enumerate}
Consequently
$2\vert\mu\vert+\vert\nu\vert=2\vert\mu'\vert+\vert\nu'\vert$ but
the lengths of $\mu$ and $\mu'$ differ by $1$. Indeed the map
$f:(\mu,\nu)\mapsto(\mu',\nu')$ is an involution, since for any pair
$(\mu',\nu')$, we can define the inverse map
$g:(\mu',\nu')\mapsto(\mu,\nu)$ as follows. Let $y$ be the smallest
part in the partition $\mu'^2\cup\nu'$ with repetition. If
$y\not\in\mu'$, then we choose $\mu=\mu'\cup (y)$ and
$\nu=\nu'\backslash (y,y)$. Otherwise we choose
$\mu=\mu'\backslash(y)$ and $\nu=\nu'\cup(y,y)$. Clearly, $g$ is the
inverse of $f$. This involution leads to (\ref{E:invT}).

In exactly the same way as for the proof of (\ref{E:Koshy1}) and
(\ref{E:koshyLa}), we can prove (\ref{E:invT}) by counting the
partitions that have exactly $k$ different parts with repetition
labeled by $s$. Let $g_{n,k}(q^{n+1}{2n-1\brack n-2}_q)$ count the
partitions $\lambda$ with $\lambda_1\le n-1$ and $\ell(\lambda)=n+1$
such that there are exactly $k$ different parts with repetition.
Then the number of partitions $\lambda$ with $\lambda_1\le n-1$ and
$\ell(\lambda)=n+1$, such that there are exactly $r$ different parts
with repetition labeled by $s$, is
\begin{align}\label{E:iepar}
\sum_{k\ge r}\binom{k}{r}g_{n,k}(q^{n+1}{2n-1\brack
n-2}_q)&=q^{r^2+r}{n-1\brack r}_{q^2}q^{n-2r+1}{2n-2r-1\brack
n-2}_q.
\end{align}
Again, by employing the generating function of
$g_{n,k}(q^{n+1}{2n-1\brack n-2}_q)$, we have
\begin{align*}
G_n(x,q)&=\sum_{k\ge 0}g_{n,k}(q^{n+1}{2n-1\brack n-2}_q)x^k=
\sum_{r\ge 0}\frac{G_n^{(r)}(1,q)}{r!}(x-1)^r\\
&=\sum_{r\ge 0}q^{r^2+r}{n-1\brack r}_{q^2}q^{n-2r+1}{2n-2r-1\brack
n-2}_q(x-1)^r.
\end{align*}
By setting $x=0$ on both sides, we get (\ref{E:invT}) and therefore
(\ref{E:cnq1in1}) follows. From the proof we see that the expression
of $T_r(n,q)$ given in (\ref{E:Tr22}) implies that the sieving
process works on the partitions counted by ${2n-1\brack n-2}_q$.

%%%%%%%%%%%%%%
%%%%%%%%%%%%%%%%%%%%%%%%%%%%%%%%%%%%%%%%%%
%%%%%%%%%%%%%%
\section{Generalize (\ref{E:cnq1in1}) to $q$-Ballot numbers}\label{S:qball}
The $q$-Ballot numbers $B_j(n,q)$ are defined by
\begin{align*}
B_j(n,q)=\frac{[j]_q}{[2n+j]_q}{2n+j\brack
n}_q=\frac{[j]_q}{[n]_q}{2n+j-1\brack n-1}_q.
\end{align*}
They count the major index of lattice paths from $(0,0)$ to
$(2n+j-1,-j+1)$ with allowed $U$-step and $D$-step that never go
below $y=-j+1$, see \cite{Kratten:89}. In particular,
$B_1(n,q)=C_{n}(q)$ and $B_r(n,1)=B_{n,r-1}$. Furthermore,
\begin{align*}
B_{j}(n,q)&={2n+j-2\brack n}_q-q^{j}{2n+j-2\brack n-2}_q
\end{align*}
also counts the partitions $\lambda$ with $\lambda_1\le n+j-2$ and
$\ell(\lambda)\le n$ whose successive ranks are all $<j-1$, see
\cite{Andrews:93}. We generalize (\ref{E:cnq1in1}) to an
equation for $q$-Ballot numbers as follows.
\begin{theorem}
The $q$-Ballot numbers $B_j(n,q)$ satisfy
\begin{align}\label{E:qBall}
B_j(n,q)&=\sum_{r=1}^{n}(-1)^{r-1}T_r^{(j)}(n,q),\\
\label{E:qBall1} T_r^{(j)}(n,q)&=q^{r^2-r}{n\brack
r}_{q^2}{2n+j-1-2r\brack n-1}_q\frac{[j]_q}{[n]_q},
\end{align}
where $T_r^{(j)}(n,q)$ is a polynomial for $n\ge 2r-j$. In
particular $T_r^{(j)}(2r-j,-q)$ is a positive polynomial if $j\le
r$.
\end{theorem}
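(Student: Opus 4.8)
The plan is to push the three assertions through the machinery of Sections~\ref{S:prove2} and~\ref{S:Tr}, carrying the extra parameter $j$.

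\emph{The identity (\ref{E:qBall}).} I would imitate the proof of (\ref{E:cnq1in1}). Using $\frac{1}{[n]_q}{2n+j-1-2r\brack n-1}_q=\frac{1}{[n+j-2r]_q}{2n+j-1-2r\brack n}_q$, then the $q^2$-Pascal rule on ${n\brack r}_{q^2}$ and the $q$-Pascal rule on the remaining $q$-binomials, one rewrites (\ref{E:qBall1}), for $n\ge 2r-j$, as
\[
T_r^{(j)}(n,q)=a_n^{(r-1)}+a_n^{(r)}-q^{\,r^2-r+j}{n-1\brack r}_{q^2}{2n+j-2-2r\brack n-2}_q,\qquad a_n^{(r)}:=q^{\,r^2+r}{n-1\brack r}_{q^2}{2n+j-2-2r\brack n}_q,
\]
where $a_n^{(r)}$ vanishes automatically once $n\le 2r$; for $j=1$ this is exactly (\ref{E:Tr22}). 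Summing with alternating signs over $r\ge 1$, the $a$-part telescopes to $a_n^{(0)}={2n+j-2\brack n}_q$, so, via $B_j(n,q)={2n+j-2\brack n}_q-q^j{2n+j-2\brack n-2}_q$, the claim reduces to the $q$-binomial identity
\[
{2n+j-2\brack n-2}_q=\sum_{r\ge 1}(-1)^{r-1}q^{\,r^2-r}{n-1\brack r}_{q^2}{2n+j-2-2r\brack n-2}_q .
\]
This is proved by the same fixed-point-free involution as in Section~\ref{S:prove2}: by (\ref{E:partheo}), $q^{\,n+j}$ times the $r$-th summand is the generating polynomial of the partitions $\mu^2\cup\nu$ with $\mu$ strictly decreasing of length $r$ and $\nu$ of length exactly $n+j-2r$, all parts $\le n-1$; toggling the smallest repeated part in or out of $\mu$ is sign-reversing with no fixed point, and the $r=0$ term accounts for everything because a partition of length $n+j>n-1$ into parts $\le n-1$ must repeat a part. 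The $r\ge 1$ sum coincides with the finite sum in (\ref{E:qBall}) since the further summands vanish.

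\emph{Polynomiality for $n\ge 2r-j$.} Write $T_r^{(j)}(n,q)=\frac{q^{\,r^2-r}}{[n]_q}{n\brack r}_{q^2}{2n+j-1-2r\brack n-1}_q[j]_q$; for $n\ge 2r-j$ the middle factor is a genuine $q$-binomial (upper index $\ge n-1\ge 0$). By $[n]_q=\prod_{d\mid n,\,d>1}\Phi_d(q)$ and pairwise coprimality of cyclotomics it suffices that the numerator vanish at every primitive $d$-th root of unity $\omega$ with $d>1$, $d\mid n$. If $d\mid j$ then $[j]_\omega=0$. If $d\nmid j$, apply the $q$-Lucas theorem: for $d$ odd, $\omega^2$ is a primitive $d$-th root, so ${n\brack r}_{\omega^2}=0$ unless $d\mid r$, and when $d\mid r$ the $q$-Lucas reduction of ${2n+j-1-2r\brack n-1}_\omega$ pairs the lower residue $d-1$ with the upper residue $(j-1)\bmod d<d-1$, hence vanishes; for $d$ even, ${2n+j-1-2r\brack n-1}_\omega=0$ unless $j\equiv 2r\pmod d$, in which case $(d/2)\nmid r$ (otherwise $2r\equiv 0$ and $d\mid j$), so ${n\brack r}_{\omega^2}=0$ since $\omega^2$ is a primitive $(d/2)$-th root. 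Hence $[n]_q$ divides the numerator and $T_r^{(j)}(n,q)$ is a polynomial.

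\emph{Positivity of $T_r^{(j)}(2r-j,-q)$ for $j\le r$, and the main obstacle.} At $n=2r-j$ the middle $q$-binomial is ${2r-j-1\brack 2r-j-1}_q=1$, so $T_r^{(j)}(2r-j,q)=q^{\,r^2-r}{2r-j\brack r-j}_{q^2}\frac{[j]_q}{[2r-j]_q}$. Substituting $q\mapsto -q$ (note $r^2-r$ is even) and using $\frac{[m]_{q^2}}{[m]_q}=\frac{1+q^m}{1+q}$ gives
\[
T_r^{(j)}(2r-j,-q)=q^{\,r^2-r}\,\frac{1-(-q)^j}{1-(-q)^{2r-j}}\,{2r-j\brack r-j}_{q^2}
=q^{\,r^2-r}\cdot\begin{cases}\dfrac{1+q^{2r-j}}{1+q^{j}}\,B_j(r-j,q^2), & j\ \text{even},\\[2mm]\dfrac{[2r-j]_q}{[j]_q}\,B_j(r-j,q^2), & j\ \text{odd},\end{cases}
\]
where $j\le r$ is exactly what makes the $q^2$-Ballot number $B_j(r-j,q^2)$ meaningful; for $j=1$ this recovers $q^{\,r^2-r}[2r-1]_q\,C_{r-1}(q^2)$ from Section~\ref{S:Tr}. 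Since every $q$-Ballot number is a positive, reciprocal and unimodal polynomial, $B_j(r-j,q^2)$ is positive, and the positivity of the whole product would be extracted as in Section~\ref{S:Tr} by a cyclotomic-divisibility argument showing that the displayed prefactor combines with the factorization of $B_j(r-j,q^2)$ into unimodal reciprocal positive factors. This last step is where the real work lies: for $j\ge 2$ the polynomial $T_r^{(j)}(2r-j,-q)$ need not be a product of two nonmonomial positive polynomials (already $T_4^{(2)}(6,-q)=q^{12}(1+q^4+q^6+q^8+q^{12})$), so positivity has to be wrung out of the interplay between the non-positive cyclotomic prefactor and the reciprocity/unimodality of the $q^2$-Ballot number rather than from a naive factorization; by contrast the telescoping and involution of the first part and the finite $q$-Lucas check of the second are routine once the rewriting of $T_r^{(j)}(n,q)$ is in hand.
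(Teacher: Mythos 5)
Your first two assertions go through, and by routes that differ from the paper's: for (\ref{E:qBall}) the paper does not telescope at all --- it runs the sieve of Section~\ref{S:prove2} directly on partitions $\lambda$ with $\ell(\lambda)=n+j$ and $\lambda_1\le n$, counted by $q^{n+j}{2n+j-1\brack n-1}_q$, obtaining ${2n+j-1\brack n-1}_q=\sum_{r\ge 1}(-1)^{r-1}q^{r^2-r}{n\brack r}_{q^2}{2n+j-1-2r\brack n-1}_q$, which becomes (\ref{E:qBall}) upon multiplying by $[j]_q/[n]_q$; your generalization of (\ref{E:Tr22}) and (\ref{E:invT}) together with $B_j(n,q)={2n+j-2\brack n}_q-q^j{2n+j-2\brack n-2}_q$ is also correct (I checked your decomposition; only note that $a_n^{(r)}$ vanishes for $n\le 2r+1-j$, not $n\le 2r$, which is harmless since the telescoping only needs $a_n^{(n)}=0$). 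Your polynomiality argument, applying $q$-Lucas at each primitive $d$-th root of unity for $d\mid n$, is sound and more self-contained than the paper's route through $\gcd$-factorizations and the Brunetti--Del Lungo positivity result.

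The genuine gap is the third assertion: you reformulate $T_r^{(j)}(2r-j,-q)$ via $q^2$-Ballot numbers, correctly observe that a naive factorization into positive factors cannot work (your example $T_4^{(2)}(6,-q)$ is right), and then explicitly defer ``the real work'' --- so the positivity for $j\le r$ is never proved. The paper closes exactly this step by a parity-splitting argument combining unimodality with reciprocity. For even $j$: the unimodality of $q^{(r^2-r)/2}{2r-j\brack r}_q$ gives that $q^{r^2-r}{2r-j\brack r}_{q^2}(1-q^j)$ has nonnegative coefficients up to $q^{\lfloor(3r^2-r-2rj)/2\rfloor}$, the geometric expansion of $1/(1-q^{2r-j})$ (this is where $j\le r$ enters) preserves nonnegativity up to half the degree, and reciprocity of the polynomial finishes. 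For odd $j$, where your obstacle lives, the paper does not expand $1/(1+q^{2r-j})$; instead it studies $qT_r^{(j)}(2r-j,q)=q^{r^2-r+1}{2r-j\brack r}_{q^2}\frac{1-q^j}{1-q^{2r-j}}$ and proves separately that its odd-power coefficients are nonnegative and its even-power coefficients are nonpositive: each claim is first established up to half the degree (unimodality of $q^{(r^2-r)/2}{2r-j\brack r}_q$, respectively the expansion $-q^j(1+q^{4r-2j}+\cdots)(1-q^{2r-2j})q^{r^2-r+1}{2r-j\brack r}_{q^2}$ for the even powers), and then extended to all degrees by reciprocity, using that $d_{\max}+d_{\min}=4r^2-2rj-4r+2j+2$ is even so that reciprocity pairs coefficients of equal parity; this is equivalent to nonnegativity of all coefficients of $T_r^{(j)}(2r-j,-q)$. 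Without an argument of this kind your proposal establishes only the identity and the polynomiality, not the positivity claim of the theorem.
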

\begin{proof}
First we shall show $T_r^{(j)}(n,q)$ is a polynomial for $n\ge
2r-j$. By setting $d=\gcd(n,r)$ and $m=\gcd(n,2r-j)$ we express
\begin{align*}
T_r^{(j)}(n,q)&=q^{r^2-r}{n\brack
r}_{q^2}\frac{[d]_{q^2}}{[n]_{q^2}}{2n+j-1-2r\brack
n-1}_q\frac{[n]_{q^2}[j]_q}{[d]_{q^2}[n]_q}\\
&=q^{r^2-r}{n\brack r}_{q^2}\frac{[d]_{q^2}}{[n]_{q^2}}
\frac{[n]_{q^2}[j]_q}{[d]_{q^2}[m]_q}\frac{[m]_q}{[2n+j-2r]_q}
{2n+j-2r\brack n}_q
\end{align*}
where ${n\brack r}_{q^2}\frac{[d]_{q^2}}{[n]_{q^2}}$ and
$\frac{[m]_q}{[2n+j-2r]_q} {2n+j-2r\brack n}_q$ are positive
polynomials as proved in Theorem~\ref{T:1} and it remains to show
$\frac{[n]_{q^2}[j]_q}{[d]_{q^2}[m]_q}$ is a polynomial. First we
can simplify
\begin{align*}
\frac{[n]_{q^2}[j]_q}{[d]_{q^2}[m]_q}=
\frac{[2n]_q[j]_q}{[2d]_q[m]_q}.
\end{align*}
For any $m_1\mid m$ and $m_1>1$, if $m_1\mid j$, then
$\Phi_{m_1}(q)\mid[j]_q$ and therefore
$\Phi_{m_1}(q)\mid\frac{[2n]_q}{[2d]_q}[j]_q$. Otherwise, $m_1\nmid
j$ and therefore $m_1\nmid d$. In fact, $m_1\nmid (2d)$. If not,
then $m_1\mid (2r)$. Since $m_1\mid m$, hence $m_1\mid (2r-j)$. That
leads to $m_1\mid j$, contradicting the assumption. Let $\omega$ be
any $m_1$-the primitive root of unity, i.e., $\omega^{m_1}=1$. Then
we have $[2n]_{\omega}=0$ and $[2d]_{\omega}\ne 0$ which implies
$\Phi_{m_1}(q)\mid \frac{[2n]_q}{[2d]_q}$. It follows that
$\Phi_{m_1}(q)\mid \frac{[2n]_q}{[2d]_q}[j]_q$ for any $m_1\vert m$
and $m_1>1$. Since any two cyclotomic polynomials are relatively
prime, we have that $\frac{[2n]_q[j]_q}{[2d]_q[m]_q}$ is a
polynomial. Therefore
\begin{align}\label{E:unimo}
\frac{[n]_{q^2}[j]_q}{[d]_{q^2}[m]_q}
\frac{[m]_q}{[2n+j-2r]_q}{2n+j-2r\brack n}_q
=\frac{(1+q^n)(1-q)}{(1-q^{2d})}[j]_q{2n+j-2r-1\brack n-1}_q
\end{align}
and $T_r^{(j)}(n,q)$ are polynomials.

In particular, $T_r^{(j)}(2r-j,q)$ is a polynomial. We next prove
$T_r^{(j)}(2r-j,-q)$
is a positive polynomial if $j\le r$. If $j$ is even, then
\begin{align*}
T_r^{(j)}(2r-j,-q)&=q^{r^2-r}{2r-j\brack
r}_{q^2}\frac{1-q^j}{1-q^{2r-j}}
\end{align*}
holds. Here $q^{r^2-r}{2r-j\brack r}_{q^2}(1-q^j)$ has nonnegative
coefficients up to $q^{\lfloor\frac{3r^2-r-2rj}{2}\rfloor}$ since
$q^{\frac{r^2-r}{2}}{2r-j\brack r}_q$ is unimodal and $j$ is even.
In combination of the expansion of $\frac{1}{1-q^{2r-j}}$ at $q=0$
where $j\le r$, we conclude that $T_r^{(j)}(2r-j,-q)$ is a positive
polynomial for even $j$. However, if $j$ is odd, we cannot prove the
claim in the same way as before. If $j$ is odd, we have
\begin{align}\label{E:jod}
T_r^{(j)}(2r-j,-q)&=q^{r^2-r}{2r-j\brack
r}_{q^2}\frac{1+q^j}{1+q^{2r-j}}.
\end{align}
Since $q^{\frac{r^2-r}{2}}{2r-j\brack r}_{q}$ is a positive
polynomial and $j$ is odd, we find that $(1+q^j)q^{r^2-r}{2r-j\brack
r}_{q^2}$ is a positive polynomial. But in view of the expansion of
$\frac{1}{1+q^{2r-j}}$ at $q=0$, we get
\begin{align*}
T_r^{(j)}(2r-j,-q)=(1+q^j)q^{r^2-r}{2r-j\brack
r}_{q^2}-q^{2r-j}(1+q^j)q^{r^2-r}{2r-j\brack r}_{q^2}+\cdots
\end{align*}
where the polynomial $-q^{2r-j}(1+q^j)q^{r^2-r}{2r-j\brack r}_{q^2}$
could make the first half of the coefficients of
$(1+q^j)q^{r^2-r}{2r-j\brack r}_{q^2}$ become negative. Therefore,
we choose to prove $T_r^{(j)}(2r-j,-q)$ in (\ref{E:jod}) to be a
positive polynomial by considering
\begin{align*}
qT_r^{(j)}(2r-j,q)=q^{r^2-r+1}{2r-j\brack
r}_{q^2}\frac{1-q^j}{1-q^{2r-j}}.
\end{align*}
The claim that polynomial $T_r^{(j)}(2r-j,-q)$ has nonnegative
coefficients is equivalent to the claim that polynomial
$qT_r^{(j)}(2r-j,q)$ has nonnegative coefficients for the odd powers
of $q$, and nonpositive coefficients for the even powers of $q$. Now
for $qT_r^{(j)}(2r-j,q)$ the unimodality of
$q^{\frac{r^2-r}{2}}{2r-j\brack r}_q$ implies that
$q^{r^2-r+1}{2r-j\brack r}_{q^2}(1-q^j)$ has nonnegative
coefficients for the odd powers of $q$, up to
$q^{\lfloor\frac{3r^2-r-2rj+1}{2}\rfloor}$. By expanding
$\frac{1}{1-q^{2r-j}}$ as a power series at $q=0$, we can conclude
that the coefficients of the odd powers of $q$, up to
$q^{\lfloor\frac{3r^2-r-2rj+1}{2}\rfloor}$ in the polynomial
$qT_r^{(j)}(2r-j,q)$ are nonnegative. Furthermore, we observe that
the maximal degree $d_{\max}$ of $q$ in the polynomial
$qT_r^{(j)}(2r-j,q)$ is $3r^2-3r-2rj+2j+1$ and the minimal degree
$d_{\min}$ of $q$ in the polynomial $qT_r^{(j)}(2r-j,q)$ is
$r^2-r+1$. Let $a_i$ be the coefficient of $q^i$ in the polynomial
$qT_r^{(j)}(2r-j,q)$. Then from the reciprocity of the polynomial
$qT_r^{(j)}(2r-j,q)$ we get $a_i=a_j$ if
$j+i=d_{\max}+d_{\min}=4r^2-2rj-4r+2j+2$. That implies for $i$ odd
and $a_i\ge 0$, $j$ is odd and $a_j\ge 0$. Therefore we can conclude
that the coefficients of all the odd powers of $q$ in the polynomial
$qT_r^{(j)}(2r-j,q)$ are nonnegative. That is equivalent to say that
the coefficients of all the even powers of $q$ in the polynomial
$T_r^{(j)}(2r-j,q)$ are nonnegative. By expanding
$\frac{1-q^j}{1-q^{2r-j}}$ at $q=0$, we have that the even powers of
$q$ in the polynomial $qT_r^{(j)}(2r-j,q)$ come from
\begin{align*}
-q^j(1+q^{4r-2j}+q^{8r-4j}+\cdots)(1-q^{2r-2j})q^{r^2-r+1}{2r-j\brack
r}_{q^2}
\end{align*}
where $-q^{r^2-r+1+j}{2r-j\brack r}_{q^2}(1-q^{2r-2j})$ is a
polynomial that has nonpositive coefficients for the even powers of
$q$, up to $q^{\lfloor\frac{3r^2-2rj-r+j+1}{2}\rfloor}$. Again from
the reciprocity of polynomial $qT_r^{(j)}(2r-j,q)$, we conclude that
the coefficients of all the even powers of $q$ in the polynomial
$qT_r^{(j)}(2r-j,q)$ are nonpositive. That is to say, the
coefficients of all the odd powers of $q$ in the polynomial
$T_r^{(j)}(2r-j,-q)$ are nonnegative. Now it remains to prove
(\ref{E:qBall}). By similar techniques as used to prove
(\ref{E:cnq1in1}), we consider the partitions counted by
$q^{n+j}{2n+j-1\brack n-1}_q$ and let $g_{n,k}(q^{n+j}{2n+j-1\brack
n-1}_q)$ count the partitions $\lambda$ with $\lambda_1\le n$ and
$\ell(\lambda)=n+j$ such that there are exactly $k$ different parts
with repetition. By following the same techniques used in the proof
of (\ref{E:cnq1in1}) of Section~\ref{S:prove2}, see the proof of
(\ref{E:iepar}), we can derive the identity
\begin{align*}
q^{n+j}{2n+j-1\brack
n-1}_q&=\sum_{r=1}^{n}(-1)^{r-1}q^{r^2+r}{n\brack
r}_{q^2}q^{n+j-2r}{2n+j-2r-1\brack n-1}_q,
\end{align*}
which is equivalent to (\ref{E:qBall}).
\end{proof}
\section{Conjecture}
Here we conjecture that if $n$ is odd, then for $m\ge n\ge 1$ the
polynomial $(1+q^n){m\brack n-1}_q$ is unimodal. If $n$ is even,
then for any even $j\ne 0$ and $m\ge n\ge 1$, the polynomial
$(1+q^n)[j]_q{m\brack n-1}_q$ is unimodal. This, in combination of
(\ref{E:unimo}), implies that $T_r(n,q)$ is a positive polynomial
for odd $n$ and $n\ge 2r+1$, and $T_r^{(j)}(n,q)$ given in
(\ref{E:qBall1}) is a positive polynomial for $n\ge 2r-j+1$.
\section*{Acknowledgement}
The authors would like to thank the anonymous reviewers for their
helpful comments that greatly helped to improve the final version of
this manuscript.

%%%%%%%%%%
%%%%%%%%%%%%%%%%%%%%%%%%%%%%%%%%%%%%%%%%%%%%%%%%%%%%%%%%%%%%%
%%%%%%%%%%

\end{document}